\definecolor{link-color}{RGB}{60,60,120} 
\declaretheorem[name=Theorem,numberwithin=section]{theorem}
\renewcommand{\nameref}[1]{\hyperref[#1]{\nameCref{#1}}}
\newcommand{\highlight}{\textbf}
\newtheorem{lemma}[theorem]{Lemma}
\newtheorem{definition}[theorem]{Definition}
\newtheorem{remark}[theorem]{Remark}
\newtheorem{example}[theorem]{Example}
\numberwithin{equation}{section}
\newcommand{\mycomment}[1]{}
\newcommand{\RR}{\mathbb R}
\newcommand{\Prr}{P_r}
\DeclareMathOperator{\im}{Im}
\title{Exact values of generic subrank}
\author{Paweł Pielasa, Matouš Šafránek, Anatoli Shatsila}
\begin{document}

\maketitle
\begin{abstract}
    In this article we prove the subrank of a generic tensor in $\mathbb{C}^{n,n,n}$ to be $Q(n) = \lfloor\sqrt{3n - 2}\rfloor$ by providing a lower bound to the known upper bound. More generally, we find the generic subrank of tensors of all orders and dimensions.
    This answers two open questions posed in \cite{10.4230/LIPIcs.CCC.2022.9}. Finally, we compute dimensions of varieties of tensors of subrank at least $r$.
\end{abstract}

\section{Introduction}

Geometry of tensors is a subject of great importance in mathematics, physics and computer science with numerous applications in complexity theory (fast matrix multiplication, P vs NP problem, etc.) \cite{JML, JML2}, quantum information theory \cite{JML3} and other areas of science ranging from signal processing to chemistry \cite{doi:10.1137/07070111X}. 

The subrank of a tensor is an important notion in algebraic complexity theory introduced by Strassen in \cite{Strassen87} to analyse the complexity of matrix multiplication. As a dual notion to tensor rank, it serves as one of the crucial invariants of tensors that has been extensively researched over the years. Recently, numerous other invariants of tensors, such as the slice rank \cite{tao}, the analytic rank \cite{Gowers}, and the geometric rank \cite{kop}, have been introduced and investigated. It is known that the subrank provides a lower bound for all invariants mentioned above.

In this work we are interested in generic values of the subrank, that is, the subrank of "almost all" tensors in a given tensor space. More precisely, one can show \cite[Proposition 2.1]{10.4230/LIPIcs.CCC.2022.9} that for any tensor space there is a Zariski open subset of tensors where subrank is constant. We call this number \textit{the generic subrank} and denote the subrank of a generic tensor $T \in K^{n_1,n_2,n_3}$ by $Q(n_1,n_2,n_3)$ with a shorthand $Q(n)=Q(n,n,n)$.

Strassen \cite{Strassen91} and  Bürgisser \cite{Burg} found the upper bound for the generic subrank $Q(n) \leq n^{2/3 + o(1)}$ but exact asymptotic values have been unknown for a long time. Only recently it has been shown in \cite{10.4230/LIPIcs.CCC.2022.9} that, in fact, $Q(n) = \Theta(\sqrt{n})$. Interestingly, the generic value of tensor parameters introduced above is the maximal value $n$, hence the subrank is much lower than aforementioned ranks for a generic tensor. 

We note that determining precise generic values for tensor invariants is a difficult problem. For instance, generic values for a rank are unknown in general (see \cite{Strassen83, FRIEDLAND2012478} and Subsection \ref{genrank}).
We highlight one example where generic values are known. Namely, Alexander and Hirschowitz (see \cite{AH} and \cite[Chapter 5.4 and Chapter 15]{JML2}) showed that a generic symmetric $d$-tensor in $K^{n,\ldots,n}$ has generic symmetric rank $\left \lceil{\frac{\binom{n+d-1}{n}}{n}}\right \rceil$ apart from a known list of exception where the values are also known.

In this paper we find precise values of the generic subrank of tensors of any order and shape, thus providing a complete answer to the problem of determining values of the generic subrank.

\subsection{Results}

It was shown in \cite[Theorem 1.2 and Theorem 1.3]{10.4230/LIPIcs.CCC.2022.9} that $$3(\lfloor \sqrt{n/3+1/4} - 1/2 \rfloor)  \leq Q(n) \leq \lfloor \sqrt{3n-2} \rfloor.$$ They conjectured (see \cite[Section 6]{10.4230/LIPIcs.CCC.2022.9})  that the upper bound on $Q(n)$ is exactly tight and checked that it is indeed the case for $n \leq 100$ on a computer. In this note we show using a combinatorial method that the upper bound is indeed tight. 

\begin{restatable*}{corollary}{SimpleMainCor}
\label{simplemaincor}
    For any $n \in \mathbb{N}$ the generic subrank $Q(n)$ is equal to $\lfloor \sqrt{3n-2} \rfloor$ over any infinite field.
\end{restatable*}

Our proof works in a more general setting. We can thus confirm that other upper bounds given in \cite{10.4230/LIPIcs.CCC.2022.9} are exactly tight. The following general \nameref{maincor} is proved in Section \ref{main-section}.

\begin{restatable*}{theorem}{MainCor}
\label{maincor}
    For any $n_1,n_2,n_3 \in \mathbb{N}$ we have the generic subrank
    $$Q(n_1,n_2,n_3)=\min\left\{n_1,n_2,n_3,\left\lfloor \sqrt{n_1+n_2+n_3-2} \right\rfloor\right\}$$
    over any infinite field.
\end{restatable*}

We build on the result (see \autoref{thmgen}) from \cite{10.4230/LIPIcs.CCC.2022.9} which says that the generic subrank $Q(n)$ is at least $r$ if certain subspaces of $K^{r,r,r}$ generate the whole space. In \autoref{transfer} 
we straightforwardly reformulate this condition to a statement about linear independence of rows of a projection matrix $M$ whose non-zero entries are generic elements of $K$ (with possible repetitions). Finally, using a combinatorial argument we show that $M$ indeed has the linearly independent rows in \autoref{supertheorem}.

The proof can be carried out for higher order tensors analogously. This is carried out in Section \ref{higher-order}, where we get the following generalisation.

\begin{restatable*}{theorem}{HighOrderMainCor}
    Let $k\geq 3$. For any $n_1,\hdots,n_k \in \mathbb{N}$ we have 
    $$Q(n_1,\ldots,n_k)=\min\left\{n_1,\ldots,n_k,\left\lfloor (n_1+\hdots+n_k-(k-1))^{\frac{1}{k-1}} \right\rfloor\right\}$$
    over any infinite field.
    In particular, $Q(\underbrace{n,
    \ldots,n}_{k})=\lfloor (kn-k+1)^{\frac{1}{k-1}} \rfloor$. 
\end{restatable*} 

Thus, the bound from \cite[Theorem 1.5]{10.4230/LIPIcs.CCC.2022.9} is exactly tight .

Further, we adapt our combinatorial method to produce results not only about the locus of generic-subrank tensors, but also about the locus of higher subrank tensors. We confirm that sets of tensors in $K^{n_1,\ldots, n_k}$ of subrank at least $r$, denoted by $\mathcal{C}_r^{(n_1,\ldots,n_k)}$, have expected dimension.

\begin{restatable*}{theorem}{DimensionThm}
\label{dimension}
    Let $k\geq3$ and the underlying field be of characteristic $0$. For any $n_1,\dots n_k,r\in\mathbb{N}$ such that $Q(n_1,\dots,n_k)< r \leq n_1,\dots,n_k$ the following holds: $$\dim (\mathcal{C}_r^{(n_1,\ldots,n_k)}) = \prod_{i=1}^k n_i - r\left(r^{k-1} - \sum_{i=1}^k n_i + (k-1)\right).$$
\end{restatable*}

The conditions on $r$ and $n_i$ only restrict them to the non-trivial range, i.e. where $\mathcal{C}_r^{(n_1,\ldots,n_k)}$ is non-empty and not full-dimensional. We are thus able to calculate the dimensions of all $\mathcal{C}_r^{(n_1,\ldots,n_k)}$.

\section*{Acknowledgements}

We would like thank Mateusz Michałek for his continuous support and guidance during our stay at the University of Konstanz where this article was written. During the preparation of the article, Paweł Pielasa and Matouš Šafránek were supported by the DAAD/Ostpartnerschaften programme and Anatoli Shatsila was supported by the Erasmus+ programme.

\section{Main result}
\label{main-section}

\subsection{Set-up}

We denote by $[n]$ the set $\{1,2,\dots,n\}$ and by $\Prr$ the set of triples $(i,j,k) \in [r]^3$ with $i,j,k$ all different. We then recall the notation introduced in \cite{10.4230/LIPIcs.CCC.2022.9}.
For a field $K$ we denote $$K^{n_1, n_2, \ldots, n_k} := K^{n_1} \otimes K^{n_2} \otimes \hdots \otimes K^{n_k}.$$
For a tensor $T \in K^{n_1, n_2, \ldots, n_k}$ we define its \textit{subrank} as the maximal non-negative integer $s \leq n$ such that $I_s \leq T$, where $$I_s := I_1^{\oplus s} = \sum_{j=1}^s \bigotimes_{i=1}^{k}e_j^{(i)}$$ and $\{e_{j}^{(i)}\}_{j=1}^{n_i}$ is a basis of the corresponding $K^{n_i}$ for each $1 \leq i \leq k$. Note that this definition is independent of the choice of the bases.
We denote by $$\mathcal{C}_r^{(n_1,\ldots,n_k)} = \{T \in K^{n_1,\ldots,n_k} \: | \: Q(T) \geq r\}$$ the set of tensors of subrank at least $r$ in $K^{n_1,\ldots,n_k}$ and by $Q(n_1,\dots n_k)$ the subrank of a generic tensor in $K^{n_1,\dots,n_k}$ (more simply called generic subrank). For proof of existence see \cite[Proposition 2.1]{10.4230/LIPIcs.CCC.2022.9}.

\begin{remark}
    Generic subrank is the highest $r$ such that $\mathcal{C}_r^{(n_1,\ldots,n_k)}$ is Zariski-dense. Note that if $K = \mathbb{R}$, Zariski-dense $\mathcal{C}_r^{(n_1,\ldots,n_k)}$ may not be dense in the Euclidean topology. Indeed, one can show that, for example, $\mathcal{C}_2^{(2,2,2)} \subset \RR^{2,2,2}$ is not dense in the Euclidean sense, even though 2 is the generic subrank (\autoref{simplemaincor}). See \cite{Kruskai} and
    \cite[Section 7]{FRIEDLAND2012478} for details. 
\end{remark}

We define a linear subspace of $K^{r,r,r}$ as follows: $$W_r := \{T \in K^{r,r,r} \: | \: T_{ijk} = 0 \text{ if } (i,j,k) \in \Prr \} \subseteq K^{r,r,r}.$$
Equivalently,
$$W_r = \left\langle e_i\otimes e_j \otimes e_k\right\rangle_{(i,j,k) \in [r]^3\setminus \Prr}.$$
For a linear subspace $\mathcal{X}$ of $K^{n_2} \otimes K^{n_3}$ we define a linear subspace $$\mathcal{X}[1] := K^{n_1} \otimes \mathcal{X} \subseteq K^{n_1, n_2, n_3}.$$ This linear space consists precisely of the tensors whose slices in the first direction belong to $\mathcal{X}$.
We define $\mathcal{X}[2]$ and $\mathcal{X}[3]$ analogously.

Our proof relies on the following result.

\begin{theorem}[Theorem 3.9 \cite{10.4230/LIPIcs.CCC.2022.9}]
\label{thmgen}
    Let $r \leq n_1,n_2,n_3$ such that $n_i - r \leq r^2$ for $i = 1,2,3$. Let $\mathcal{X}_i \subseteq K^{r,r}$ be a generic subspace of dimension $n_i-r$ for $i=1,2,3$. Suppose $$\mathcal{X}_1[1] + \mathcal{X}_2[2] + \mathcal{X}_3[3] + W_r = K^{r,r,r},$$
    then $Q(n_1,n_2,n_3) \geq r$. Further, if characteristic of $K$ is 0, then we have the converse.
    
\end{theorem}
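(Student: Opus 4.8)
The plan is to realise $\mathcal{C}_r^{(n_1,n_2,n_3)}$ as the image of an explicit group action and to translate the inequality $Q(n_1,n_2,n_3)\ge r$ into the surjectivity of a single differential. Recall that $I_r\le T$ means $I_r$ is a restriction of $T$, i.e.\ there are maps $A_i\colon K^{n_i}\to K^r$ with $(A_1\otimes A_2\otimes A_3)T=I_r$; since $I_r$ has rank $r$ in each flattening, every such $A_i$ is forced to be surjective, and writing each surjection as the coordinate projection $\pi_i\colon K^{n_i}\to K^r$ precomposed with an element of $GL_{n_i}$ shows that
$$\mathcal{C}_r^{(n_1,n_2,n_3)}=\big(GL_{n_1}\times GL_{n_2}\times GL_{n_3}\big)\cdot V_0,\qquad V_0:=\{\,T : (\pi_1\otimes\pi_2\otimes\pi_3)T=I_r\,\},$$
where $V_0$ is the affine subspace of tensors whose $[r]^3$-block equals $I_r$. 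This set is constructible (Chevalley), so by the remark preceding the theorem, $Q(n_1,n_2,n_3)\ge r$ holds exactly when it is Zariski-dense, i.e.\ when the action map $\mu\colon \big(GL_{n_1}\times GL_{n_2}\times GL_{n_3}\big)\times V_0\to K^{n_1,n_2,n_3}$, $(g,T)\mapsto g\cdot T$, is dominant.

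Next I would reduce dominance to a tangent-space condition. Over any field, surjectivity of $d\mu$ at a single point forces $\mu$ to be dominant; in characteristic $0$ the converse also holds by generic smoothness, and this is the only place characteristic $0$ enters. Let $\rho\colon K^{n_1,n_2,n_3}\to K^{r,r,r}$ be the restriction to the $[r]^3$-block, with kernel $W=\{\,S : \rho(S)=0\,\}=T_{T_0}V_0$. Computing $d\mu$ at a point $(\mathrm{id},T_0)$ with $T_0\in V_0$ gives image $W+\mathfrak{gl}_{n_1}\cdot T_0+\mathfrak{gl}_{n_2}\cdot T_0+\mathfrak{gl}_{n_3}\cdot T_0$, and since $W=\ker\rho$ is already contained in it, $d\mu$ is surjective if and only if
$$\rho(\mathfrak{gl}_{n_1}\cdot T_0)+\rho(\mathfrak{gl}_{n_2}\cdot T_0)+\rho(\mathfrak{gl}_{n_3}\cdot T_0)=K^{r,r,r}.$$

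The heart of the argument is to identify these three projected images. Writing $T_0=I_r+Y$ with $Y\in W$ and applying a general $\xi\in\mathfrak{gl}_{n_1}$, its $(i,j,k)$-entry for $i,j,k\in[r]$ equals $\xi_{ij}[j=k]+\sum_{i'>r}\xi_{ii'}Y_{i'jk}$; the first summand sweeps out $\langle e_i\otimes e_j\otimes e_j\rangle_{i,j\in[r]}$, while the second sweeps out $K^r\otimes\mathcal{X}_1=\mathcal{X}_1[1]$, where $\mathcal{X}_1\subseteq K^{r,r}$ is spanned by the slices $(Y_{i'jk})_{j,k\in[r]}$ for $i'>r$. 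The same computation for the other two factors produces $\mathcal{X}_2[2]$ and $\mathcal{X}_3[3]$ together with the diagonal pieces $\langle e_i\otimes e_j\otimes e_i\rangle$ and $\langle e_i\otimes e_i\otimes e_k\rangle$. These three diagonal pieces have supports $\{j=k\}$, $\{i=k\}$, $\{i=j\}$, whose union is exactly $[r]^3\setminus P_r$, so their sum is precisely $W_r$. Hence surjectivity of $d\mu$ at $T_0$ reads
$$\mathcal{X}_1[1]+\mathcal{X}_2[2]+\mathcal{X}_3[3]+W_r=K^{r,r,r}.$$

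Finally I would tie down the genericity and read off both implications. The slices defining $\mathcal{X}_1,\mathcal{X}_2,\mathcal{X}_3$ use pairwise disjoint blocks of the coordinates of $Y$ (those entries whose first, second, resp.\ third index exceeds $r$), so for generic $Y$ they are independent generic subspaces of $K^{r,r}$, each of dimension $\min\{n_i-r,r^2\}=n_i-r$ by the hypothesis $n_i-r\le r^2$. Thus if the generic $\mathcal{X}_i$ satisfy the spanning condition, then $d\mu$ is surjective at a generic $T_0$, so $\mu$ is dominant and $Q(n_1,n_2,n_3)\ge r$ over any infinite field; conversely, $Q(n_1,n_2,n_3)\ge r$ makes $\mu$ dominant, and in characteristic $0$ generic smoothness returns surjectivity of $d\mu$ and hence the spanning condition. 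I expect the main obstacle to be the tangent-space computation of this paragraph and the previous one: one must check that the diagonal contributions assemble exactly into $W_r$ and that the off-block slices realise mutually independent generic subspaces, and one must invoke generic smoothness — which fails in positive characteristic — to obtain the converse.
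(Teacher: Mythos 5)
The paper never proves this statement itself — it is imported verbatim as Theorem 3.9 of \cite{10.4230/LIPIcs.CCC.2022.9} — but your reconstruction is correct and follows essentially the same route as the original argument, which the paper also re-uses later: realising $\mathcal{C}_r^{(n_1,n_2,n_3)}$ as the image of the action map on tensors with (block-)diagonal $[r]^3$-part (the map $\psi_r$ in the proof of \autoref{dimension}, with $X_r^{(n_1,n_2,n_3)}$ playing the role of your $V_0$), reducing Zariski-density of $\mathcal{C}_r^{(n_1,n_2,n_3)}$ to surjectivity of the differential of that map, and identifying the image of the differential modulo the kernel of the block restriction as $\mathcal{X}_1[1]+\mathcal{X}_2[2]+\mathcal{X}_3[3]+W_r$. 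Your tangent-space computation, the use of the hypothesis $n_i-r\le r^2$ exactly to ensure the slice spans are generic $(n_i-r)$-dimensional subspaces, and the isolation of generic smoothness as the only place characteristic zero enters (for the converse) all agree with the intended proof.
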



Working modulo $W_r$ we can rephrase the condition above as a statement about linear independence of rows of a projection matrix. 

\begin{definition}
\label{matrix}
Assume we are given $r, n_1,n_2,n_3\in\mathbb{N}$ such that $r\leq n_1,n_2,n_3$. We define a matrix $M \in K^{r(n_1+n_2+n_3 - 3r) \times r(r-1)(r-2)}$, whose columns are labelled by triples $(t,m,s)$ with $t \in [3],  m \in [r], s \in [n_t-r]$, and rows by triples $(i,j,k) \in \Prr$. The matrix $M$ is defined entry-wise by
    $$M^{t,m,s}_{i,j,k} = \begin{cases}
        a^{1,s}_{j,k},& t=1, m=i; \\
        a^{2,s}_{i,k},& t=2, m=j; \\
        a^{3,s}_{i,j},& t=3, m=k; \\
        0 & \text{otherwise},
    \end{cases}$$
    with variable entries $a^{t,s}_{i_1,i_2}$.
\end{definition}

Here the assumption $r\leq n_1,n_2,n_3$ is only to have the set $[n_t-r]$ defined.

\begin{lemma}
\label{transfer}
    With the assumptions of \autoref{thmgen} we have $$\mathcal{X}_1[1] + \mathcal{X}_2[2] + \mathcal{X}_3[3] + W_r = K^{r,r,r}$$ if and only if the matrix $M$ has generic rank (i.e. for generic values of $a^{t,s}_{i_1,i_2} \in K$) equal to $r(r-1)(r-2)$, that is full row rank.
\end{lemma}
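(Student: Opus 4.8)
The plan is to unwind both sides of the equivalence in \autoref{thmgen} by working in the quotient space $K^{r,r,r}/W_r$ and identifying the images of the three subspaces $\cX_i[i]$ with the column span of the matrix $M$. First I would observe that $K^{r,r,r}/W_r$ has a natural basis indexed by the triples $(i,j,k)\in\Prr$, since $W_r$ is exactly the span of the basis tensors $e_i\otimes e_j\otimes e_k$ with $(i,j,k)\notin\Prr$. Thus the condition $\cX_1[1]+\cX_2[2]+\cX_3[3]+W_r = K^{r,r,r}$ is equivalent to saying that the images of $\cX_1[1]$, $\cX_2[2]$, $\cX_3[3]$ under the quotient map $\pi\colon K^{r,r,r}\to K^{r,r,r}/W_r$ together span the whole quotient, a space of dimension $\lvert\Prr\rvert = r(r-1)(r-2)$.

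Next I would make the column labelling explicit. A generic subspace $\cX_t\subseteq K^{r,r}$ of dimension $n_t-r$ can be presented by a basis of $n_t-r$ generic matrices; write the $s$-th basis matrix of $\cX_t$ as having entries $a^{t,s}_{i_1,i_2}$. Then for $t=1$ the subspace $\cX_1[1]=K^{r}\otimes\cX_1$ is spanned by the tensors $e_m\otimes B^{1,s}$ where $B^{1,s}\in\cX_1$ is the $s$-th basis element and $m\in[r]$; applying $\pi$ and reading off the coefficient on the basis vector indexed by $(i,j,k)\in\Prr$ recovers precisely $M^{1,m,s}_{i,j,k}=a^{1,s}_{j,k}$ when $m=i$ and $0$ otherwise. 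The analogous computation for $t=2,3$ produces the other two blocks of the matrix in \autoref{matrix}. Hence the columns of $M$ are exactly the coordinate vectors (in the $\Prr$-basis) of a spanning set for $\pi(\cX_1[1]+\cX_2[2]+\cX_3[3])$, and the generic dimension of $\cX_1[1]+\cX_2[2]+\cX_3[3]+W_r$ modulo $W_r$ equals the generic rank of $M$.

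Combining these, the sum equals all of $K^{r,r,r}$ exactly when $\pi$ of the sum is surjective, which happens exactly when the columns of $M$ span a space of full dimension $r(r-1)(r-2)$; since $M$ has exactly $r(r-1)(r-2)$ rows, this is the same as $M$ having full row rank. The remaining bookkeeping point is that genericity of the subspaces $\cX_i$ matches genericity of the entries $a^{t,s}_{i_1,i_2}$: a generic choice of dimension-$(n_t-r)$ subspace corresponds to a generic choice of the $n_t-r$ basis matrices, so "generic subspaces span" translates faithfully into "$M$ has generic rank equal to $r(r-1)(r-2)$."

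The main subtlety I anticipate is the dimension count that makes "full column span" and "full row rank" coincide, and the verification that no collisions occur among the nonzero entries that would prevent the blocks from being read off cleanly; in particular one must check that the entries of $M$ are genuinely independent generic variables across the three blocks (so that the three subspaces are chosen independently), which the indexing $a^{t,s}_{i_1,i_2}$ with the extra superscript $t$ guarantees. The hypothesis $n_i-r\le r^2$ from \autoref{thmgen} ensures the column count $r(n_1+n_2+n_3-3r)$ is at least the row count so that full row rank is not ruled out on size grounds, and I would flag this as the place where that inequality is silently used.
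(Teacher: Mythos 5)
Your proof is correct and follows essentially the same route as the paper: pass to the quotient $K^{r,r,r}/W_r$ with basis indexed by $\Prr$, present each generic $\mathcal{X}_t$ by generic basis matrices with entries $a^{t,s}_{i_1,i_2}$, and identify $M$ as the matrix of the natural projection $\mathcal{X}_1[1]\oplus\mathcal{X}_2[2]\oplus\mathcal{X}_3[3]\to K^{r,r,r}/W_r$, so that the spanning condition becomes surjectivity, i.e.\ full row rank. One correction to your closing remark: the hypothesis $n_i-r\le r^2$ does not ensure that $M$ has at least as many columns as rows --- it bounds the column count from \emph{above}, its actual role being to guarantee that subspaces of dimension $n_i-r$ exist inside the $r^2$-dimensional space $K^{r,r}$; the inequality that makes the columns at least as numerous as the rows is $r\le\sqrt{n_1+n_2+n_3-2}$, which enters only later in \autoref{supertheorem}, and the lemma itself needs no such size condition, since when $M$ has fewer columns than rows both sides of the equivalence are simply false.
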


\begin{proof}

    Denote the basis of $K^r$ by $e_1, \ldots, e_r$. Since $\mathcal{X}_t$ is generic for $t=1,2,3$ we can write 
    $$\mathcal{X}_1 = \left\langle \sum_{1 \leq j,k \leq r} a_{j,k}^{1,s} \; e_j \otimes e_k \right\rangle_{1 \leq s \leq n_1-r}$$
    $$\mathcal{X}_2 = \left\langle \sum_{1 \leq i,k \leq r} a_{i,k}^{2,s} \; e_i \otimes e_k \right\rangle_{1 \leq s \leq n_2-r}$$
    $$\mathcal{X}_3 = \left\langle \sum_{1 \leq i,j \leq r} a_{i,j}^{3,s} \; e_i \otimes e_j \right\rangle_{1 \leq s \leq n_3-r}$$
    for generic $a_{i,j}^{t,s} \in K$. 
    It follows from the definition of $W_r$ that the basis of the space $K^{r,r,r}/W_r$ is given by $\{e_i \otimes e_j \otimes e_k\}_{i,j,k \in \Prr}$. On the other hand, the basis of $\mathcal{X}_1[1] \oplus \mathcal{X}_2[2] \oplus \mathcal{X}_3[3]$
    is given by vectors $v_{t,m,s}$ with $t \in [3],  m \in [r], s \in [n_t-r]$ and $$v_{1,m,s} = \sum_{1 \leq j,k \leq r}a_{j,k}^{1,s} \; e_m \otimes e_j \otimes e_k$$ and analogously for $t=2,3$. Note that the matrix $M$ defined in the statement is the matrix of the natural projection $$\pi: \mathcal{X}_1[1] \oplus \mathcal{X}_2[2] \oplus \mathcal{X}_3[3] \to K^{r,r,r}/W_r.$$ 
    Therefore, $$\mathcal{X}_1[1] + \mathcal{X}_2[2] + \mathcal{X}_3[3] + W_r = K^{r,r,r}$$ holds if and only if $\pi$ is surjective, or, equivalently, $M$ has linearly independent rows.

\end{proof}

\begin{figure}
    \centering
    \includegraphics[scale=1.2]{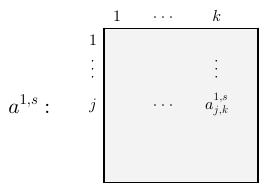} \\
    \includegraphics[scale=1.2]{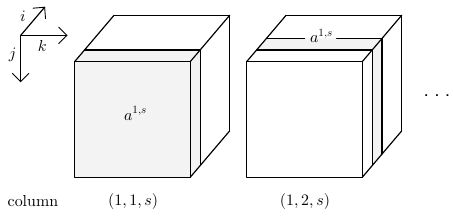}
\end{figure}

Now if we prove that the matrix $M$ has generic rank $r(r-1)(r-2)$ for any $r$ (keep in mind that $M$ depends on $r$), we get by the \autoref{transfer} and \autoref{thmgen} a lower bound on $Q(n_1,n_2,n_3)$.
In fact, the upcoming combinatorial argument proves that the matrix has generic rank equal to $r(r-1)(r-2)$ whenever it has at least as many columns as rows, that is whenever it is actually possible (\autoref{supertheorem}). That will immediately give us the highest possible lower bound, the generic subrank itself (\autoref{equivalence-remark}).

To give an idea about what we are working with, we provide an example.

\begin{example}
\label{matrix4}
Let $n_1=n_2=n_3=6$ and $r=4$. Below we present the matrix $M$. Dots indicate zeros and commas are omitted to save space. Indices of rows $(i,j,k)$ and columns $(t,m,s)$ are shown.

\begingroup\small$$\hskip-2.5cm \begin{pNiceMatrix}[first-row, first-col, code-for-first-row = \rotate]
 & 1,1,1 & 1,1,2 & 1,2,1 & 1,2,2 & 1,3,1 & 1,3,2 & 1,4,1 & 1,4,2 & 2,1,1 & 2,1,2 & 2,2,1 & 2,2,2 & 2,3,1 & 2,3,2 & 2,4,1 & 2,4,2 & 3,1,1 & 3,1,2 & 3,2,1 & 3,2,2 & 3,3,1 & 3,3,2 & 3,4,1 & 3,4,2 \\
123 & a^{11}_{23} & a^{12}_{23} & \cdot & \cdot & \cdot & \cdot & \cdot & \cdot & \cdot & \cdot & a^{21}_{13} & a^{22}_{13} & \cdot & \cdot & \cdot & \cdot & \cdot & \cdot & \cdot & \cdot & a^{31}_{12} & a^{32}_{12} & \cdot & \cdot \\
124 & a^{11}_{24} & a^{12}_{24} & \cdot & \cdot & \cdot & \cdot & \cdot & \cdot & \cdot & \cdot & a^{21}_{14} & a^{22}_{14} & \cdot & \cdot & \cdot & \cdot & \cdot & \cdot & \cdot & \cdot & \cdot & \cdot & a^{31}_{12} & a^{32}_{12} \\
132 & a^{11}_{32} & a^{12}_{32} & \cdot & \cdot & \cdot & \cdot & \cdot & \cdot & \cdot & \cdot & \cdot & \cdot & a^{21}_{12} & a^{22}_{12} & \cdot & \cdot & \cdot & \cdot & a^{31}_{13} & a^{32}_{13} & \cdot & \cdot & \cdot & \cdot \\
134 & a^{11}_{34} & a^{12}_{34} & \cdot & \cdot & \cdot & \cdot & \cdot & \cdot & \cdot & \cdot & \cdot & \cdot & a^{21}_{14} & a^{22}_{14} & \cdot & \cdot & \cdot & \cdot & \cdot & \cdot & \cdot & \cdot & a^{31}_{13} & a^{32}_{13} \\
142 & a^{11}_{42} & a^{12}_{42} & \cdot & \cdot & \cdot & \cdot & \cdot & \cdot & \cdot & \cdot & \cdot & \cdot & \cdot & \cdot & a^{21}_{12} & a^{22}_{12} & \cdot & \cdot & a^{31}_{14} & a^{32}_{14} & \cdot & \cdot & \cdot & \cdot \\
143 & a^{11}_{43} & a^{12}_{43} & \cdot & \cdot & \cdot & \cdot & \cdot & \cdot & \cdot & \cdot & \cdot & \cdot & \cdot & \cdot & a^{21}_{13} & a^{22}_{13} & \cdot & \cdot & \cdot & \cdot & a^{31}_{14} & a^{32}_{14} & \cdot & \cdot \\
213 & \cdot & \cdot & a^{11}_{13} & a^{12}_{13} & \cdot & \cdot & \cdot & \cdot & a^{21}_{23} & a^{22}_{23} & \cdot & \cdot & \cdot & \cdot & \cdot & \cdot & \cdot & \cdot & \cdot & \cdot & a^{31}_{21} & a^{32}_{21} & \cdot & \cdot \\
214 & \cdot & \cdot & a^{11}_{14} & a^{12}_{14} & \cdot & \cdot & \cdot & \cdot & a^{21}_{24} & a^{22}_{24} & \cdot & \cdot & \cdot & \cdot & \cdot & \cdot & \cdot & \cdot & \cdot & \cdot & \cdot & \cdot & a^{31}_{21} & a^{32}_{21} \\
231 & \cdot & \cdot & a^{11}_{31} & a^{12}_{31} & \cdot & \cdot & \cdot & \cdot & \cdot & \cdot & \cdot & \cdot & a^{21}_{21} & a^{22}_{21} & \cdot & \cdot & a^{31}_{23} & a^{32}_{23} & \cdot & \cdot & \cdot & \cdot & \cdot & \cdot \\
234 & \cdot & \cdot & a^{11}_{34} & a^{12}_{34} & \cdot & \cdot & \cdot & \cdot & \cdot & \cdot & \cdot & \cdot & a^{21}_{24} & a^{22}_{24} & \cdot & \cdot & \cdot & \cdot & \cdot & \cdot & \cdot & \cdot & a^{31}_{23} & a^{32}_{23} \\
241 & \cdot & \cdot & a^{11}_{41} & a^{12}_{41} & \cdot & \cdot & \cdot & \cdot & \cdot & \cdot & \cdot & \cdot & \cdot & \cdot & a^{21}_{21} & a^{22}_{21} & a^{31}_{24} & a^{32}_{24} & \cdot & \cdot & \cdot & \cdot & \cdot & \cdot \\
243 & \cdot & \cdot & a^{11}_{43} & a^{12}_{43} & \cdot & \cdot & \cdot & \cdot & \cdot & \cdot & \cdot & \cdot & \cdot & \cdot & a^{21}_{23} & a^{22}_{23} & \cdot & \cdot & \cdot & \cdot & a^{31}_{24} & a^{32}_{24} & \cdot & \cdot \\
312 & \cdot & \cdot & \cdot & \cdot & a^{11}_{12} & a^{12}_{12} & \cdot & \cdot & a^{21}_{32} & a^{22}_{32} & \cdot & \cdot & \cdot & \cdot & \cdot & \cdot & \cdot & \cdot & a^{31}_{31} & a^{32}_{31} & \cdot & \cdot & \cdot & \cdot \\
314 & \cdot & \cdot & \cdot & \cdot & a^{11}_{14} & a^{12}_{14} & \cdot & \cdot & a^{21}_{34} & a^{22}_{34} & \cdot & \cdot & \cdot & \cdot & \cdot & \cdot & \cdot & \cdot & \cdot & \cdot & \cdot & \cdot & a^{31}_{31} & a^{32}_{31} \\
321 & \cdot & \cdot & \cdot & \cdot & a^{11}_{21} & a^{12}_{21} & \cdot & \cdot & \cdot & \cdot & a^{21}_{31} & a^{22}_{31} & \cdot & \cdot & \cdot & \cdot & a^{31}_{32} & a^{32}_{32} & \cdot & \cdot & \cdot & \cdot & \cdot & \cdot \\
324 & \cdot & \cdot & \cdot & \cdot & a^{11}_{24} & a^{12}_{24} & \cdot & \cdot & \cdot & \cdot & a^{21}_{34} & a^{22}_{34} & \cdot & \cdot & \cdot & \cdot & \cdot & \cdot & \cdot & \cdot & \cdot & \cdot & a^{31}_{32} & a^{32}_{32} \\
341 & \cdot & \cdot & \cdot & \cdot & a^{11}_{41} & a^{12}_{41} & \cdot & \cdot & \cdot & \cdot & \cdot & \cdot & \cdot & \cdot & a^{21}_{31} & a^{22}_{31} & a^{31}_{34} & a^{32}_{34} & \cdot & \cdot & \cdot & \cdot & \cdot & \cdot \\
342 & \cdot & \cdot & \cdot & \cdot & a^{11}_{42} & a^{12}_{42} & \cdot & \cdot & \cdot & \cdot & \cdot & \cdot & \cdot & \cdot & a^{21}_{32} & a^{22}_{32} & \cdot & \cdot & a^{31}_{34} & a^{32}_{34} & \cdot & \cdot & \cdot & \cdot \\
412 & \cdot & \cdot & \cdot & \cdot & \cdot & \cdot & a^{11}_{12} & a^{12}_{12} & a^{21}_{42} & a^{22}_{42} & \cdot & \cdot & \cdot & \cdot & \cdot & \cdot & \cdot & \cdot & a^{31}_{41} & a^{32}_{41} & \cdot & \cdot & \cdot & \cdot \\
413 & \cdot & \cdot & \cdot & \cdot & \cdot & \cdot & a^{11}_{13} & a^{12}_{13} & a^{21}_{43} & a^{22}_{43} & \cdot & \cdot & \cdot & \cdot & \cdot & \cdot & \cdot & \cdot & \cdot & \cdot & a^{31}_{41} & a^{32}_{41} & \cdot & \cdot \\
421 & \cdot & \cdot & \cdot & \cdot & \cdot & \cdot & a^{11}_{21} & a^{12}_{21} & \cdot & \cdot & a^{21}_{41} & a^{22}_{41} & \cdot & \cdot & \cdot & \cdot & a^{31}_{42} & a^{32}_{42} & \cdot & \cdot & \cdot & \cdot & \cdot & \cdot \\
423 & \cdot & \cdot & \cdot & \cdot & \cdot & \cdot & a^{11}_{23} & a^{12}_{23} & \cdot & \cdot & a^{21}_{43} & a^{22}_{43} & \cdot & \cdot & \cdot & \cdot & \cdot & \cdot & \cdot & \cdot & a^{31}_{42} & a^{32}_{42} & \cdot & \cdot \\
431 & \cdot & \cdot & \cdot & \cdot & \cdot & \cdot & a^{11}_{31} & a^{12}_{31} & \cdot & \cdot & \cdot & \cdot & a^{21}_{41} & a^{22}_{41} & \cdot & \cdot & a^{31}_{43} & a^{32}_{43} & \cdot & \cdot & \cdot & \cdot & \cdot & \cdot \\
432 & \cdot & \cdot & \cdot & \cdot & \cdot & \cdot & a^{11}_{32} & a^{12}_{32} & \cdot & \cdot & \cdot & \cdot & a^{21}_{42} & a^{22}_{42} & \cdot & \cdot & \cdot & \cdot & a^{31}_{43} & a^{32}_{43} & \cdot & \cdot & \cdot & \cdot
\end{pNiceMatrix}$$
\endgroup

Note that each variable appears exactly twice in the matrix. Take for example $t=3$, then $a^{3,s}_{i,j}$ appears in row $(i,j,k)$ and column $(3,k,s)$ for all $k \in [4], k\not=i,j$. Hence, in general, it appears $r-2$ times.
\end{example}

\begin{example}
\label{cross-out}
Consider the matrix 
$$\begin{pmatrix}
a_1 & \cdot & a_3 & \cdot \\
\cdot & a_2 & a_1 & \cdot\\
\cdot & a_3 & \cdot & a_4 \\
a_4 & \cdot & \cdot & a_2
\end{pmatrix},$$
where dots indicate zeros.
If we write its determinant as the sum of (up to) 24 terms, the monomial $a_1^2a_2a_3$ appears exactly once, as we will verify. Since the variable $a_1$ appears only twice in the matrix, to get $a_1^2$ as a factor in a determinant term, these two entries must be multiplied. However, since each factor in a determinant term must come from a different row and column, the remaining factors $a_2$, $a_3$ must come from the submatrix
$$\begin{pNiceMatrix}[create-medium-nodes, left-margin, right-margin]
    a_1 & \cdot & a_3 & \cdot \\
    \cdot & a_2 & a_1 & \cdot\\
    \cdot & a_3 & \cdot & a_4 \\
    a_4 & \cdot & \cdot & a_2
\CodeAfter
\begin{tikzpicture}[line width=0.3mm, rounded corners=1mm]
\node (1-1r) at (1-1) [draw,minimum width=0.5cm,minimum height=0.4cm] {};
\node (2-3r) at (2-3) [draw,minimum width=0.5cm,minimum height=0.4cm] {};
\draw (1-1r.east) -- (1-4-medium.east) ;
\draw (1-1r.south) -- (4-1-medium.south) ;
\draw (2-1-medium.west) -- (2-3r.west) ;
\draw (2-3r.east) -- (2-4-medium.east) ;
\draw (1-3-medium.north) -- (2-3r.north) ;
\draw (2-3r.south) -- (4-3-medium.south) ;

\end{tikzpicture}
\end{pNiceMatrix}
= \begin{pmatrix}
    a_3 & a_4 \\
    \cdot & a_2
\end{pmatrix}.$$

The computation above is straighforward, however, formally, one sees $a_2$ appears only once and then $a_3$ appears only once in the remaining $1\times 1$ submatrix, so there is only one way to multiply the matrix entries to get this monomial.
\end{example}

We aim to find such a unique monomial in our matrix $M$, because then it has full rank (\autoref{unique-enough}). Note that, however, our matrix $M$ is not necessarily a square matrix. In the following definition we formalise the procedure from the example above. 

\begin{definition}
\label{unique}
Let $M_0$ be a $k\times \ell$ matrix where each entry is either a variable from $\{a_1, a_2, \dots, a_n\}$ or zero, and no variable appears twice in one row or column. Define $M_i$ inductively as the submatrix of $M_{i-1}$ with all rows and columns containing $a_i$ crossed out. We call a monomial $m=a_1^{p_1}a_2^{p_2}\dots a_n^{p_n}$ of degree $k$ a \highlight{unique row monomial} of $M_0$ if $M_{i-1}$ contains $a_i$ exactly $p_i$ times for every $i$.
\end{definition}

Later, when we decribe the algorithm to find such a monomial, we will not use this formal submatrix notation. Rather, as in \autoref{cross-out}, in each step we will pick a variable, find all its uncrossed occurrences (i.e. those in the submatrix) and cross them out with their rows and columns. What remains is the next submatrix. We will refer to columns and rows of the current submatrix as \textit{uncrossed} and to all others as \textit{crossed}. When we say we cross out something, we mean we do it according to the algorithm, that is, we always cross out all remaining occurrences of a variable we chose.

\begin{lemma}
\label{unique-enough}
If there is a unique row monomial of a $k\times \ell$ matrix $M_0$ over an infinite field, then $M_0$ has generically full row rank, that is rank $k$.
\end{lemma}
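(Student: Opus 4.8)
The plan is to prove full row rank by exhibiting a single nonzero maximal minor. Since $M_0$ is $k\times\ell$, it has full row rank exactly when some $k\times k$ submatrix has nonzero determinant; and because each such determinant is a polynomial in the variables $a_1,\dots,a_n$, it suffices to find one $k\times k$ submatrix whose determinant is \emph{not} the zero polynomial. Over an infinite field a nonzero polynomial cannot vanish at every point, so such a minor is nonzero for a generic choice of the $a_i$, giving $\rk M_0 = k$ generically. Thus the whole problem reduces to producing one $k\times k$ submatrix $N$ with $\det N \not\equiv 0$.

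First I would read off the submatrix $N$ from the crossing-out procedure itself. By hypothesis $M_{i-1}$ contains $a_i$ exactly $p_i$ times, and since no variable repeats in a row or column (a property inherited by every submatrix), these $p_i$ occurrences lie in $p_i$ distinct rows and $p_i$ distinct columns, so step $i$ deletes exactly $p_i$ rows and $p_i$ columns. As $\deg m = \sum_i p_i = k$ equals the number of rows, the procedure deletes all $k$ rows and exactly $k$ distinct columns; let $C$ be this set of $k$ columns and let $N$ be the $k\times k$ submatrix on all rows and the columns in $C$. The target minor is $\det N$, and the monomial $m$ does occur in its Leibniz expansion: selecting, for each $i$, all $p_i$ occurrences of $a_i$ gives one entry from each row and each column of $N$, i.e.\ a transversal whose product is $m$.

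The heart of the argument — and the step I expect to be the main obstacle — is to show that $m$ is produced by a \emph{unique} transversal of $N$, so that its coefficient in $\det N$ is the sign of that single permutation, namely $\pm 1 \neq 0$, forcing $\det N \not\equiv 0$. I would prove this by induction on $k$ following the crossing order. All $p_1$ occurrences of $a_1$ in $M_0$ sit in columns deleted at step $1$, hence in $C$, so $N$ contains $a_1$ exactly $p_1$ times; to obtain the factor $a_1^{p_1}$ a transversal must therefore use all $p_1$ of these entries. This forces the transversal on the $p_1$ rows and $p_1$ columns crossed at step $1$, and the remaining choice is a transversal, in the square submatrix of $N$ supported on the surviving rows and columns, whose product is $m' = a_2^{p_2}\cdots a_n^{p_n}$. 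That submatrix is precisely $M_1$ restricted to its own crossed-out columns, and $m'$ is by construction a unique row monomial of $M_1$ (now with $k-p_1$ rows), so the inductive hypothesis gives uniqueness of the remaining transversal. Uniqueness of the whole transversal follows, completing the induction with base case the empty matrix, whose determinant is $1$.

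Combining the three steps, $\det N$ is a nonzero polynomial, hence nonzero for generic $a_i$ over the infinite field, so $M_0$ attains rank $k$ generically. The only points demanding care are the bookkeeping that the $p_i$ occurrences really occupy distinct rows and columns (where the no-repetition hypothesis is essential) and the forcedness claim that every $m$-producing transversal must grab all occurrences of $a_1$; everything else is the routine reduction from maximal minors to generic rank.
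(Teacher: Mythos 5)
Your proposal is correct and follows essentially the same route as the paper: both select, for each $i$, all occurrences of $a_i$ in $M_{i-1}$ to form a transversal of a $k\times k$ submatrix, argue the resulting monomial has coefficient $\pm 1$ in that minor, and conclude generic full row rank since a nonzero polynomial cannot vanish identically over an infinite field. The only difference is that you spell out, via induction along the crossing order, the uniqueness of the transversal, which the paper asserts more briefly ``by construction.''
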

\begin{proof}
    For each $i$, we take every $a_i$ in $M_{i-1}$. They are in different rows and columns and there is $k$ of them, so they form a monomial in a certain $k \times k$ minor of $M_0$. By construction, there is no other way to get this monomial in any minor. Thus, $M_0$ has a $k\times k$ minor which is non-zero as a polynomial, which in an infinite field means that it is non-zero on an open set of values of $a_i$.
\end{proof}

\subsection{Algorithm}
Now we describe an algorithm to find a unique row monomial, or, in the language of \autoref{cross-out}, to cross out variables so that eventually all rows are crossed out.

\begin{example}
\label{blocks-example}
    We may want to pick an element from the row $(1,2,4)$. Choose for example $a^{1,s}_{2,4}$ in the column $(1,1,s)$ for some $s$. We always have to cross out all remaining occurences of the chosen element. In \autoref{matrix4}, it appears in the row $(3,2,4)$, but with higher $r$ it would also be in rows $(5,2,4)$, $(6,2,4)$ and so on (in columns $(1,5,s)$, $(1,6,s)$ etc). In total, it would be in $r-2$ rows.

    Suppose now $r=5$. With the element $a^{1,s}_{2,4}$, we cross out rows
    
$$\begin{matrix}
 1,2,4 & 3,2,4 & 5,2,4.  
\end{matrix}$$

It will turn out to be useful to loop the row numbers from $1$ to $r$ and consider all rows
$$\begin{matrix}
 1,2,4 & 3,2,4 & 5,2,4 \\  
 2,3,5 & 4,3,5 & 1,3,5 \\
 3,4,1 & 5,4,1 & 2,4,1 \\
 4,5,2 & 1,5,2 & 3,5,2 \\
 5,1,3 & 2,1,3 & 4,1,3,
\end{matrix}$$
and then prove that we can cross them out all.

In the following definition, we will call the three columns “orbits” and the table above a “$1$-block of size 3”. The $1$ means that $t=1$, so the first number changes between orbits, while the two others do not.



\end{example}

Now we formalize these ideas.


\begin{definition}
    There is a natural $\mathbb{Z}/r\mathbb{Z}$-action on the set $\Prr$, which we identify with rows of $M$, given by $a \cdot (i,j,k) = (i+a, j+a, k+a) \mod r.$ For $t = 1,2,3$ we say that a set $B$ of $b$ orbits under this action is a \highlight{t-block of size b} if for any two orbits $O_1, O_2 \in B$ and some (or, equivalently, any) $p_1 \in O_1$ there exists $p_2 \in O_2$ such that $p_1$ and $p_2$ differ only in the $t$-th place, e.g.\,$p_1 = (i_1,j,k)$ and $p_2 = (i_2,j,k)$ for $t=1$. Note that $0 \leq b \leq r-2$.

\end{definition}


    

We also point out this simple property which will be useful later.

\begin{lemma}
\label{intersection}
If $t_1\not=t_2$, then a $t_1$-block and a $t_2$-block share at most one orbit.
\end{lemma}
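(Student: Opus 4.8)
The plan is to attach to each orbit its three pairwise ``difference invariants'' and to recognize a $t$-block as exactly the family of orbits sharing one prescribed such invariant; the lemma will then drop out because fixing two of the three differences pins down an orbit uniquely.

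First I would record that the diagonal $\mathbb{Z}/r\mathbb{Z}$-action is free on $\Prr$: the stabilizer condition $a\cdot(i,j,k)=(i,j,k)$ forces $a\equiv 0 \pmod r$, so every orbit has exactly $r$ elements. Shifting $(i,j,k)\mapsto(i+a,j+a,k+a)$ leaves each of the three differences $j-i$, $k-i$, $k-j$ (taken $\bmod\, r$) unchanged, so these are orbit invariants. Moreover any triple in an orbit is recovered from a single coordinate together with these differences, so an orbit is completely determined by any two of the three differences, the third being their linear combination $k-i=(j-i)+(k-j)$.

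Next I would characterize when two orbits $O_1,O_2$ admit representatives differing only in the $t$-th coordinate: this happens exactly when they share the difference of the two coordinates other than $t$, namely $k-j$ for $t=1$, $\,k-i$ for $t=2$, and $j-i$ for $t=3$. The forward implication, which is all the lemma requires, is immediate: representatives agreeing in the two non-$t$ places have equal difference there, and that difference is an orbit invariant. (The converse, which slides the two shared coordinates into place using freeness of the action so that every value of a coordinate is attained, is what justifies the ``some (or, equivalently, any)'' phrasing in the block definition, but it is not needed here.) Consequently a $t$-block, being a set of orbits that are pairwise $t$-adjacent, is precisely a set of orbits all carrying one common value of this single difference invariant.

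Finally, suppose $O$ and $O'$ both lie in a $t_1$-block and in a $t_2$-block with $t_1\neq t_2$. Membership in the same $t_1$-block forces $O$ and $O'$ to agree in the difference invariant controlled by $t_1$, and membership in the same $t_2$-block forces them to agree in a second, distinct difference invariant. By the first step, agreeing in two of the three pairwise differences determines the orbit, whence $O=O'$; so the two blocks share at most one orbit. I do not expect a genuine obstacle in this argument — the only points requiring care are the bookkeeping of which difference each value of $t$ controls, and checking that exactly two of the three invariants get fixed when $t_1\neq t_2$ (for $t_1=t_2$ one would fix only a single difference, which is why distinctness of $t_1,t_2$ is essential).
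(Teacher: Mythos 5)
Your proof is correct and is essentially the paper's argument in different clothing: the paper picks, for the unique $t\neq t_1,t_2$, the canonical representative of each orbit with a $1$ in the $t$-th place and notes that the two blocks change complementary coordinates of it, which is exactly your observation that a $t$-block fixes the difference of the two non-$t$ coordinates and that two of the three difference invariants determine the orbit. No gap; the bookkeeping of which invariant each $t$ controls is handled correctly.
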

\begin{proof}
Note that there is a unique $t\not = t_1,t_2$. From each orbit we can pick a unique representative which has a 1 on the $t$-th place. Assume the two blocks share at least one orbit, whose representative is $p$. Then all other representatives of (orbits of) the $t_1$-block can be obtained by changing the $t_1$-th number of $p$, and similarly, other representatives of the $t_2$-block can be obtained by changing the $t_2$-th number in $p$. Changing the $t_1$-th and $t_2$-th number clearly gives different results, so the blocks share no other orbit.
\end{proof}
 
In our algorithm, we will always cross out whole orbits. This leads us to a notion of a crossed orbit.

\begin{definition}
\label{defblock}
    Assume that elements (rows) of every orbit are either all crossed or all uncrossed. Then we call an orbit \highlight{crossed} if every (or equivalently, any) row in the orbit is crossed. We say a $t$-block is \highlight{maximal uncrossed} if it contains only uncrossed orbits and is maximal among those with respect to inclusion.
\end{definition}

In \autoref{blocks-example}, maximal meant that we had to cross out all three rows that contained an uncrossed variable $a^{1,s}_{2,4}$ and then similarly for the rest of the orbit. However, if a row (with its orbit) is already crossed, $a^{1,s}_{2,4}$ in that row is no longer in the relevant submatrix, so we do not cross it and do not include that row (and orbit) in the block.

Now we have all notions to prove the main result. Fist, we introduce the method to cross out blocks.

\begin{theorem}
\label{block}
    Let $t\in\{1,2,3\}$. Consider a maximal uncrossed $t$-block of size $b$ and let $B$ be the set of rows in the union of the orbits of $B$. Let $S\subseteq [n_t-r]$ be a set of cardinality $b$ and let $C=\{(t,m,s)\mid m\in[r], s\in S\}$ be a set of uncrossed columns.\footnote{The fact that no rows are crossed in $B$ is included in the \autoref{defblock}.} Then it is possible to cross out precisely all rows in $B$ and all columns in $C$ so that in each step when a variable $\Tilde{a}$ is crossed out then all its occurrences are crossed out as well.
\end{theorem}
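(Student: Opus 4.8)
The plan is to reduce the statement to a purely combinatorial assertion about a perfect matching in the occurrence incidence of the block, and then to build the crossing order explicitly out of the cyclic $\ZZ/r\ZZ$-symmetry. First I would use the symmetry between the three directions to assume $t=1$, and let $\omega\in\ZZ/r\ZZ$ be the common value of $j-k$ on the block (every orbit in a $1$-block has a well-defined $j-k$, and by the proof of \autoref{intersection} this invariant is exactly what distinguishes $1$-blocks). Writing $D\subseteq\ZZ/r\ZZ\setminus\{0,\omega\}$ for the set of the $b$ values $i-k$ occurring among the orbits of the block, I would index the rows of $B$ by pairs $(k,e)\in\ZZ/r\ZZ\times D$ (the row $(k+e,\,k+\omega,\,k)$), the columns of $C$ by pairs $(m,s)\in\ZZ/r\ZZ\times S$, and the block variables by pairs $(s,k)$, where $v_{s,k}=a^{1,s}_{k+\omega,k}$ occupies exactly the entries $(\text{row }(k,e),\ \text{col }(k+e,s))$ for $e\in D$. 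Here the \emph{maximality} of $B$ enters crucially: the occurrences of $v_{s,k}$ with $i-k\notin D$ lie in orbits of difference $\omega$ that are already crossed, so they are inert; hence every uncrossed occurrence of a block variable lies in a row of $B$ and a column of $C$, and these occurrences sit in pairwise distinct rows and distinct columns.

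Next I would record the bookkeeping that makes ``precisely'' meaningful. Crossing one variable removes exactly as many previously-uncrossed rows as columns (its number of uncrossed occurrences, since no variable repeats in a row or column, cf.\ \autoref{unique}), the rows and columns removed across the whole procedure are disjoint, and by the previous paragraph they always stay inside $B$ and $C$. Since $|B|=|C|=br$, crossing precisely $B$ and $C$ is therefore \emph{equivalent} to exhibiting a sequence of block-variable crossings whose used occurrences form a perfect matching between the rows of $B$ and the columns of $C$. Equivalently, the $br\times br$ matrix $M_B$ of block variables has a unique row monomial in the sense of \autoref{unique}, so that \autoref{unique-enough} drops out as a byproduct.

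To produce such an order I would exploit the cyclic structure directly. Group the rows and columns into $r$ \emph{super-nodes}, where super-node $m$ consists of the $b$ rows of $B$ with first coordinate $m$ together with the $b$ columns $(1,m,s)$, $s\in S$; each super-node is a complete $b\times b$ bipartite block, and $v_{s,k}$ contributes exactly one cell to each of the $b$ super-nodes $m\in k+D$. Crossing the block then amounts to choosing, in every super-node, a permutation matching its rows to its columns, compatibly across the circulant adjacency $m\sim m'$ iff $m-m'\in D-D$. I would build this by a threading argument: cross a \emph{seed} variable fully (using all $b$ of its occurrences), then repeatedly cross variables having exactly one remaining uncrossed occurrence, so that the crossing propagates one cell at a time through adjacent super-nodes, introducing a fresh seed whenever a cycle of the circulant graph has to be closed. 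The invariant to maintain is that the partially-crossed super-nodes always form a connected ``front'' and that each super-node, once completed, has exactly one crossed cell in every row and every column.

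The hard part will be exactly this construction for an \emph{arbitrary} difference set $D$ and arbitrary $b\le r-2$. The naive attempt—crossing a single variable for each $k$, so that every variable is used once—fails: it would force a proper colouring of the Cayley graph $\mathrm{Cay}(\ZZ/r\ZZ,\,D-D)$ in which every translate of $D$ is rainbow, and this is already obstructed when $D$ is a two-element interval and $r$ is odd (an odd-cycle obstruction). Consequently the order must genuinely interleave full ``seed'' crossings with single-occurrence ``threading'' steps, and the crux is to show that the seeds can always be placed and the threads routed so that the circulant structure closes and each super-node is completed exactly once — i.e.\ that the selected generalized diagonal of $M_B$ really is a unique row monomial. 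I expect to control the wrap-around by advancing the connected front until it must cycle and then closing it with a new seed, using the $\ZZ/r\ZZ$-equivariance to normalise the position of $D$ and treat all blocks uniformly.
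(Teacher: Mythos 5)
Your reduction to $t=1$, the parametrization of the block by the invariant $\omega$ and the difference set $D$, the use of maximality to show that every occurrence of a block variable outside $B\times C$ lies in an already-crossed orbit (this is exactly \autoref{all-in-block}), and the reformulation of the task as exhibiting a sequence of variable crossings whose used occurrences form a perfect matching between the rows of $B$ and the columns of $C$ are all correct; the observation that the block matrix splits into $r$ disjoint $b\times b$ ``super-nodes'' is a genuine structural insight. But the theorem \emph{is} the assertion that such a sequence exists, and that is the one thing your proposal does not prove. Your final paragraph concedes that ``the hard part will be exactly this construction'' and replaces it with a plan (seeds, threading, a connected front, ``I expect to control the wrap-around\dots'') whose invariants are never verified. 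The difficulty is real, as your own odd-cycle remark indicates: for instance, imposing the same matching $\sigma_m(e_i)=s_i$ in every super-node already fails for $r=4$, $D=\{2,3\}$, because the resulting before/after constraints among the variables $v_{s,k}$ form a cycle; so the matchings must be chosen adaptively across super-nodes, and nothing in the proposal shows that a globally consistent choice exists for arbitrary $D$, $b\le r-2$ and $r$.

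The paper closes exactly this gap with a much simpler, non-structural argument that your proposal is missing: cross out greedily (pick \emph{any} variable having an occurrence in an uncrossed row of $B$ and an uncrossed column of $C$), and prove the process never gets stuck by a pigeonhole count. Concretely, suppose a column $(1,m,s)\in C$ is still uncrossed. The rows of $B$ with first index $m$ number exactly $b$ (one per orbit); if all of them were crossed, then each such crossing used a type-$1$ variable occurring in that row, and that step simultaneously crossed a column $(1,m,s')$ with $s'\in S$. Since no column is crossed twice, distinct rows force distinct such columns, so all $b$ columns $(1,m,s')$, $s'\in S$, would be crossed --- including $(1,m,s)$ itself, a contradiction. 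Hence some entry of the column lies in an uncrossed row, the greedy step can always be performed, and since each step crosses equally many rows and columns and $|B|=|C|$, the procedure terminates having crossed precisely $B$ and $C$. Without this counting argument (or a completed version of your threading construction in its place), your proposal is a correct restatement of what must be shown, not a proof of it.
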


Note that $|B|=|C|$, which is clearly necessary. Remember that whenever we cross out a variable, we must cross out all its occurrences. Let us first state a lemma.

\begin{lemma}
\label{all-in-block}
Let $\Tilde{a}$ be a variable in the matrix. Assume there is a row in $B$ containing $\Tilde{a}$ and there is a column in $C$ containing $\Tilde{a}$. Then all occurrences of $\Tilde{a}$ in the matrix lie in the rows of $B$ and in the columns of $C$.
\end{lemma}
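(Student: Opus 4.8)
The plan is to pin down exactly where the variable $\tilde a$ occurs in $M$ and then match these occurrences against the rows of $B$ and the columns of $C$ using the $\ZZ/r\ZZ$-orbit structure.

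First I would identify $\tilde a$. Since $\tilde a$ lies in a column of $C$, and every column of $C$ carries the first label $t$ (the type of the block), \autoref{matrix} forces $\tilde a = a^{t,s_0}_{x,y}$ for some $s_0 \in S$ and some pair $x \neq y$: variables with a superscript $t' \neq t$ simply never occur in type-$t$ columns. Reading off \autoref{matrix} in the representative case $t=1$ (the cases $t=2,3$ are symmetric), the variable $a^{1,s_0}_{x,y}$ sits in row $(i,x,y)$ and column $(1,i,s_0)$ for every $i \in [r]\setminus\{x,y\}$, and in no other position. Thus $\tilde a$ has exactly $r-2$ occurrences; the occurrence-rows are obtained from one another by changing only the $t$-th coordinate, and every occurrence sits in a column whose labels are $(t,\cdot,s_0)$.

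The column containment is then immediate and unconditional: each occurrence lies in a column $(t,i,s_0)$ with $i \in [r]$ and $s_0 \in S$, which is a column of $C$ by definition. For the row containment I would argue via orbits. The $r-2$ occurrence-rows $(i,x,y)$ are pairwise inequivalent under the action, because the two coordinates other than the $t$-th are fixed and already pin down the shift; hence they represent $r-2$ distinct orbits, any two of which differ only in the $t$-th place. These orbits therefore form a maximal $t$-block $\mathcal{B}$ (of size $r-2$) containing the orbit $O_0$ of the given occurrence in $B$. Since $O_0 \in B$ and $B$ is a $t$-block, every orbit of $B$ differs from $O_0$ only in the $t$-th place and so equals one of these occurrence-orbits, i.e. $B \subseteq \mathcal{B}$. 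Now if some occurrence-orbit $O \in \mathcal{B}$ were uncrossed yet $O \notin B$, then $B \cup \{O\}$ would again be an uncrossed $t$-block (any two orbits of $\mathcal{B}$ differ only in the $t$-th coordinate), contradicting the maximality of $B$. Hence every uncrossed occurrence-orbit already lies in $B$, so every uncrossed occurrence of $\tilde a$ sits in a row of $B$.

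The one point requiring care — and the only real obstacle — is that $\tilde a$ has $r-2$ occurrences while $B$ may consist of only $b < r-2$ orbits, so a naive reading would seem to leave occurrences outside $B$. The resolution is that, since whole orbits are crossed together, any occurrence lying in a crossed orbit of $\mathcal{B}$ has already been crossed out along with its row and is therefore absent from the working submatrix; the maximality argument above shows precisely that the surviving (uncrossed) occurrences fill out exactly the orbits of $B$. This is what \autoref{block} needs: when $\tilde a$ is crossed out, all of its still-present occurrences are confined to the rows of $B$ and the columns of $C$, so crossing it out never reaches beyond the target region.
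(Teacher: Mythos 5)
Your proof is correct and takes essentially the same approach as the paper's: identify $\tilde a = a^{t,s_0}_{x,y}$ from \autoref{matrix}, note that its occurrences sit exactly in the rows $(i,x,y)$ and columns $(t,i,s_0)$ for $i\in[r]\setminus\{x,y\}$, observe that $C$ by construction contains all columns $(t,\cdot,s_0)$ once it contains one, and invoke maximality of the uncrossed $t$-block to conclude that every uncrossed occurrence-row lies in $B$. Your explicit treatment of the orbit distinctness, the auxiliary full block $\mathcal{B}$, and the crossed-orbit convention is simply a more detailed rendering of the paper's terser argument.
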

\begin{proof}
We can assume $t=1$ since other cases are analogous. Note that an element $a^{1,s}_{j,k}$ lies precisely in the rows indexed by $(m,j,k)$ and the columns indexed by $(1,m,s)$ with $m\in[r]\setminus \{j,k\}$. If $C$ contains one of such columns then it contains all of them. On the other hand, if one of the rows is in $B$, then, since $B$ is a maximal uncrossed 1-block, all rows obtained by changing $m$ are either in an orbit of the block, or in a crossed orbit.
\end{proof}

The proof of \autoref{block} will allow us to cross out variables in an arbitrary order. 

\begin{proof}[Proof of \autoref{block}]
We pick any variable that lies in an uncrossed row of $B$ and an uncrossed column of $C$ and cross out all rows and columns where it occurs. By \autoref{all-in-block}, we will cross out only allowed rows and columns. Then we pick another variable with the same properties and repeat the process.
To complete the proof we show that as long as there is an uncrossed column in $C$ (or, equivalently, an uncrossed row in $B$ since $|B|=|C|$), there is always an uncrossed element in this column, which we can pick.

We prove it for $t=1$ since other cases are analogous. Assume, for contradiction, that a column $(1,m,s)\in C$ is uncrossed but all non-zero entries of the column lying in rows of $B$ are crossed (note that this can happen only if their corresponding rows are crossed). These entries are $a^{1,s}_{j,k}$ in the row $(m,j,k)$ for all $j,k$ such that $(m,j,k)\in B$. In every orbit in $B$, there is exactly one index triple with the first index equal to $m$, so there is $b$ of them in $B$. If all these rows are crossed then each of them contains an element that has been crossed out. But note that every such element lies in a column $(1,m,s')$ for some $s'\in S$. Now, since no two elements in one column can be crossed out, we see that $b$ columns of the type $(1,m,s')$ in $C$ were crossed out. Since there are precisely $b$ columns of this form in $C$, the column $(1,m,s)$ must have been crossed out as well, which is a contradiction.

\end{proof}

\begin{theorem}
\label{supertheorem}
    For any $n_1,n_2,n_3,r\in \mathbb{N}$ such that $r \leq n_1,n_2,n_3,\sqrt{n_1+n_2+n_3-2}$ the matrix $M$ defined in \ref{matrix} has generic rank\footnote{Sometimes we call it simply “rank”.} $r(r-1)(r-2)$ over any infinite field.
\end{theorem}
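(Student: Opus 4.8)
The plan is to invoke \autoref{unique-enough}: it suffices to exhibit a way to cross out every row of $M$, processing one variable at a time and always removing all of its occurrences. By \autoref{block} this is organised as crossing out one maximal uncrossed block per step, and together with \autoref{transfer} and \autoref{thmgen} a successful crossing-out yields the claimed rank. Note that the hypothesis $r\le\sqrt{n_1+n_2+n_3-2}$ is exactly the inequality $r(n_1+n_2+n_3-3r)\ge r(r-1)(r-2)$, i.e.\ $M$ has at least as many columns as rows; so we must cross out all rows precisely in the regime where this is numerically possible.

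First I would pass to the orbit picture. The $\ZZ/r\ZZ$-action is free on $\Prr$, so it partitions the $r(r-1)(r-2)$ rows into $(r-1)(r-2)$ orbits of size $r$, and \autoref{block} crosses out a maximal uncrossed $t$-block of $b$ orbits at the cost of $b$ of the $n_t-r$ available column groups $\{(t,m,s):m\in[r]\}$ of direction $t$. It is convenient to record an orbit by the pair of differences $(j-i,\,k-i)\in(\ZZ/r\ZZ)^2$: the admissible orbits are those avoiding the three lines $d_1=0$, $d_2=0$, $d_1=d_2$, and the three block directions $t=1,2,3$ become the three families of affine lines in directions $(1,1)$, $(1,0)$, $(0,1)$. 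Two blocks of different directions then meet in at most one orbit, which is \autoref{intersection}, and each single family already tiles the admissible region into $r-1$ lines of $r-2$ orbits.

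The core observation is that, however we proceed, every orbit is crossed exactly once, so the total number of column groups consumed is the fixed number $(r-1)(r-2)$; only its distribution $(B_1,B_2,B_3)$ among the three directions is at our disposal, and we must achieve $B_t\le n_t-r$ for each $t$. Such a distribution exists because $\sum_t (n_t-r)\ge (r-1)(r-2)$, which is the hypothesis. To realise a chosen distribution I would fix a global linear order on the $3(r-1)$ lines and, running through this order, cross out each line's currently uncrossed orbits (skipping lines already emptied by transverse crossings). Each step crosses precisely a maximal uncrossed block, so \autoref{block} applies verbatim, and each orbit is charged to the direction of the earliest line through it; deriving the assignment from a single total order makes the whole procedure automatically consistent, with no scheduling conflicts. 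By \autoref{intersection} each transverse line shares at most one orbit with a given line, so crossing transverse lines first lowers that line's uncrossed size one orbit at a time, which is the lever that lets me tune the $B_t$ to individual units rather than only in multiples of $r-2$.

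The main obstacle is exactly this tuning: proving that for every vector $(n_1-r,n_2-r,n_3-r)$ of nonnegative integers summing to at least $(r-1)(r-2)$ one can choose the ordering so that $B_t\le n_t-r$ holds simultaneously for all $t$. The difficulty lives entirely in the granular and extremal cases, for instance when some $n_t-r$ is positive but smaller than $r-2$, so direction $t$ cannot afford a full line and may be spent only on lines already thinned by transverse crossings, or when one budget is $0$ and the region must be tiled by the remaining two families. I expect to resolve this by an explicit interleaving — spend the most generous directions on near-full lines first, then discharge each remaining budget one reduced line at a time, using \autoref{intersection} to supply the required size decrements — together with a short verification that block maximality is preserved at every step and that the column groups assigned to distinct blocks of the same direction remain disjoint.
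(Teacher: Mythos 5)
Your set-up is sound and matches the paper's: the orbit picture in difference coordinates is correct ($(r-1)(r-2)$ admissible orbits; three families of $r-1$ lines with $r-2$ orbits each; \autoref{intersection} as the transversality statement), and so is the accounting that each orbit is crossed exactly once at the cost of one column group, so a successful run amounts to a distribution $(B_1,B_2,B_3)$ with $\sum_t B_t=(r-1)(r-2)$ and $B_t\le n_t-r$. But your argument stops exactly where the theorem's content begins. The existence of a schedule realising such a distribution --- what you call ``the tuning'' --- is the entire difficulty, and you only announce a plan for it (``I expect to resolve this by an explicit interleaving\dots'') without an argument covering the extremal cases you yourself flag (a budget that is positive but smaller than $r-2$, a zero budget, several small budgets simultaneously). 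Note the constraint is not merely numerical: a maximal uncrossed $t$-block through a given orbit is the set of \emph{currently uncrossed} orbits of its direction-$t$ line, so a direction-$t$ step of size $b<r-2$ is only available after transverse crossings have already removed exactly $r-2-b$ orbits from that line. Small budgets can therefore only be spent late, while large budgets spent early destroy the transverse intersections needed to thin lines down; showing these pressures are always compatible is a genuine induction, not a ``short verification,'' and it is missing.

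The paper avoids global scheduling entirely with a local greedy argument, and this is the idea your proposal lacks. Keep the invariant that at every stage the number of uncrossed rows is at most the number of uncrossed columns: it holds initially because $r\le\sqrt{n_1+n_2+n_3-2}$, and it is preserved because each application of \autoref{block} crosses out equally many rows and columns. Writing $\mathcal{S}_t$ for the number of remaining column groups in direction $t$, fix any uncrossed orbit $\mathcal{O}$ and let $B_t$ be the maximal uncrossed $t$-block through $\mathcal{O}$. If $|B_t|_o\ge\mathcal{S}_t+1$ for all $t$, then by \autoref{intersection} the three blocks pairwise meet only in $\mathcal{O}$, so
$$|B_1\cup B_2\cup B_3|_o=|B_1|_o+|B_2|_o+|B_3|_o-2\ge\mathcal{S}_1+\mathcal{S}_2+\mathcal{S}_3+1,$$
giving at least $r(\mathcal{S}_1+\mathcal{S}_2+\mathcal{S}_3+1)$ uncrossed rows against exactly $r(\mathcal{S}_1+\mathcal{S}_2+\mathcal{S}_3)$ uncrossed columns, contradicting the invariant. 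Hence at every stage some maximal uncrossed block fits inside its remaining budget, the greedy procedure never stalls, and no advance planning of the distribution $(B_1,B_2,B_3)$ is needed. If you wish to salvage your own route you must supply the scheduling induction; the counting contradiction above is both shorter and removes the need for it.
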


\begin{proof}

First note that the assumption $r\leq\sqrt{n_1+n_2+n_3-2}$ implies that $M$ has at least as many columns as rows. That is necessary to have full row rank, and here we prove that is sufficient.

Our strategy will be to find a monomial in $a_{i,j}^{t,s}$'s which appears in an $r(r-1)(r-2)\times r(r-1)(r-2)$ minor of the matrix $M$ with a nonzero coefficient, as in \autoref{unique}. 

We will repeatedly cross out maximal uncrossed blocks as described in \autoref{block}. Note that as long as there are uncrossed orbits, there are (non-empty) maximal uncrossed blocks. By \autoref{block}, we can cross out any such block provided there is a suitable set $C$ of uncrossed columns. Below we will show that at any point there exists a maximal uncrossed block and a suitable set $C$ for this block. This will imply that we can cross out maximal uncrossed blocks according to the algorithm until all orbits are crossed out.

Note that by repeatedly using the algorithm described in the proof of \autoref{block}, we always cross out all $r$ columns $(t,m,s)$ with a given $t$ and $s$. Thus, it makes sense to consider for each $t=1,2,3$ the number of remaining indices $s$ such that (all) columns $(t,m,s)$ for $m\in[r]$ are uncrossed. We denote this number by $\mathcal{S}_t$. Note that if for some $t$ we have that $\mathcal{S}_t$ is at least the size of a maximal $t$-block, then we can cross out this block by \autoref{block} (we can pick a big enough set $S$ of remaining indices). By contradiction, we prove that this is always the case.

Pick an uncrossed orbit $\mathcal{O}$. For each $t=1,2,3$, denote by $B_t$ the unique maximal uncrossed $t$-block containing $\mathcal{O}$. We denote the number of \textit{orbits} in $B_t$ by $|B_t|_o$. Assume for contradiction that for every $t$ we have $|B_t|_o\geq \mathcal{S}_t+1$. Now, let us count the number of orbits in the union $B_1\cup B_2 \cup B_3$. Note that $B_i$'s share one orbit $\mathcal{O}$, so by \autoref{intersection}, no two of them have other common orbits. 
Hence we can calculate
$$|B_1\cup B_2 \cup B_3|_o = |B_1|_o + |B_2|_o + |B_3|_o - 2 \geq \mathcal{S}_1+\mathcal{S}_2+\mathcal{S}_3+1.$$

Counting the rows from orbits in $B_1\cup B_2 \cup B_3$ we see that there are at least $r \cdot |B_1\cup B_2 \cup B_3|_o$ uncrossed rows and exactly $r \cdot (\mathcal{S}_1+\mathcal{S}_2+\mathcal{S}_3)$ uncrossed columns. The inequality above implies that there are more uncrossed rows than uncrossed columns left, which is impossible. Indeed, since $M$ has at least as many columns as rows and, since we always cross out the same number of columns as rows, we can conclude that at any point there are at most as many uncrossed rows as uncrossed columns.

Thus, there is always a $t\in \{1,2,3\}$ for which we can cross out $B_t$ according to \autoref{block}. Therefore, we can find an $r(r-1)(r-2) \times r(r-1)(r-2)$ minor containing a monomial with coefficient $\pm 1$ using the crossing out procedure, and we can use \autoref{unique-enough}.

\end{proof}

\subsection{Conclusion}

\MainCor

\begin{proof}
    
    Let $r = \min\{n_1,n_2,n_3,\lfloor \sqrt{n_1+n_2+n_3-2} \rfloor\}$. This guarantees $r\leq n_1,n_2,n_3$ so that $M$ to be well-defined and the assumptions of \autoref{thmgen} are satisfied.\footnote{The \autoref{thmgen} also requires $n_i -r \leq r^2$ for $i=1,2,3$. However, if $n_i - r > r^2$ for some $i$, then 
    it is easy to see that $Q(n_1,n_2,n_3) \geq r$.}
    It follows from \autoref{transfer} and \autoref{supertheorem} that if $\mathcal{X}_i \subseteq K^{r,r}$ is a generic subspace of dimension $n_i-r$ for $i=1,2,3$ then $$\mathcal{X}_1[1] + \mathcal{X}_2[2] + \mathcal{X}_3[3] + W_r = K^{r,r,r}.$$ Then by \autoref{thmgen} we have $$Q(n_1,n_2,n_3) \geq r.$$

    The other inequality follows from \cite[Theorem 1.5]{10.4230/LIPIcs.CCC.2022.9}, while $Q(n_1,n_2,n_3)\leq n_1,n_2,n_3$ is known for any tensor.
\end{proof}

\SimpleMainCor

\begin{remark}
\label{equivalence-remark}
    \autoref{supertheorem} says that $M$ has full row rank whenever it has at least as many columns as rows. Thus, it is easy to see from our method that $Q(n_1,n_2,n_3) \leq \lfloor \sqrt{n_1+n_2+n_3-2} \rfloor$ if the field has characteristic zero. Indeed, for $r>\sqrt{n_1+n_2+n_3-2}$ the matrix has fewer columns than rows, so it is not of full row rank and we can use the converse implication in \autoref{thmgen}.
\end{remark}

\begin{example}
To intrigued readers we give the result of our algorithm when applied on the matrix from $\autoref{matrix4}$, that is with $r=4$ and $n_1=n_2=n_3=6$. Note that there are many more ways to perform it because \autoref{all-in-block} and \autoref{supertheorem} leave some freedom of choice.
    \begingroup\small$$\hskip-2cm\begin{pNiceMatrix}[create-medium-nodes, cell-space-limits = 0.9pt]
a^{11}_{23} & a^{12}_{23} & & & & & & & & & a^{21}_{13} & a^{22}_{13} & & & & & & & & & a^{31}_{12} & a^{32}_{12} & & \\
a^{11}_{24} & a^{12}_{24} & & & & & & & & & a^{21}_{14} & a^{22}_{14} & & & & & & & & & & & a^{31}_{12} & a^{32}_{12} \\
a^{11}_{32} & a^{12}_{32} & & & & & & & & & & & a^{21}_{12} & a^{22}_{12} & & & & & a^{31}_{13} & a^{32}_{13} & & & & \\
a^{11}_{34} & a^{12}_{34} & & & & & & & & & & & a^{21}_{14} & a^{22}_{14} & & & & & & & & & a^{31}_{13} & a^{32}_{13} \\
a^{11}_{42} & a^{12}_{42} & & & & & & & & & & & & & a^{21}_{12} & a^{22}_{12} & & & a^{31}_{14} & a^{32}_{14} & & & & \\
a^{11}_{43} & a^{12}_{43} & & & & & & & & & & & & & a^{21}_{13} & a^{22}_{13} & & & & & a^{31}_{14} & a^{32}_{14} & & \\
& & a^{11}_{13} & a^{12}_{13} & & & & & a^{21}_{23} & a^{22}_{23} & & & & & & & & & & & a^{31}_{21} & a^{32}_{21} & & \\
& & a^{11}_{14} & a^{12}_{14} & & & & & a^{21}_{24} & a^{22}_{24} & & & & & & & & & & & & & a^{31}_{21} & a^{32}_{21} \\
& & a^{11}_{31} & a^{12}_{31} & & & & & & & & & a^{21}_{21} & a^{22}_{21} & & & a^{31}_{23} & a^{32}_{23} & & & & & & \\
& & a^{11}_{34} & a^{12}_{34} & & & & & & & & & a^{21}_{24} & a^{22}_{24} & & & & & & & & & a^{31}_{23} & a^{32}_{23} \\
& & a^{11}_{41} & a^{12}_{41} & & & & & & & & & & & a^{21}_{21} & a^{22}_{21} & a^{31}_{24} & a^{32}_{24} & & & & & & \\
& & a^{11}_{43} & a^{12}_{43} & & & & & & & & & & & a^{21}_{23} & a^{22}_{23} & & & & & a^{31}_{24} & a^{32}_{24} & & \\
& & & & a^{11}_{12} & a^{12}_{12} & & & a^{21}_{32} & a^{22}_{32} & & & & & & & & & a^{31}_{31} & a^{32}_{31} & & & & \\
& & & & a^{11}_{14} & a^{12}_{14} & & & a^{21}_{34} & a^{22}_{34} & & & & & & & & & & & & & a^{31}_{31} & a^{32}_{31} \\
& & & & a^{11}_{21} & a^{12}_{21} & & & & & a^{21}_{31} & a^{22}_{31} & & & & & a^{31}_{32} & a^{32}_{32} & & & & & & \\
& & & & a^{11}_{24} & a^{12}_{24} & & & & & a^{21}_{34} & a^{22}_{34} & & & & & & & & & & & a^{31}_{32} & a^{32}_{32} \\
& & & & a^{11}_{41} & a^{12}_{41} & & & & & & & & & a^{21}_{31} & a^{22}_{31} & a^{31}_{34} & a^{32}_{34} & & & & & & \\
& & & & a^{11}_{42} & a^{12}_{42} & & & & & & & & & a^{21}_{32} & a^{22}_{32} & & & a^{31}_{34} & a^{32}_{34} & & & & \\
& & & & & & a^{11}_{12} & a^{12}_{12} & a^{21}_{42} & a^{22}_{42} & & & & & & & & & a^{31}_{41} & a^{32}_{41} & & & & \\
& & & & & & a^{11}_{13} & a^{12}_{13} & a^{21}_{43} & a^{22}_{43} & & & & & & & & & & & a^{31}_{41} & a^{32}_{41} & & \\
& & & & & & a^{11}_{21} & a^{12}_{21} & & & a^{21}_{41} & a^{22}_{41} & & & & & a^{31}_{42} & a^{32}_{42} & & & & & & \\
& & & & & & a^{11}_{23} & a^{12}_{23} & & & a^{21}_{43} & a^{22}_{43} & & & & & & & & & a^{31}_{42} & a^{32}_{42} & & \\
& & & & & & a^{11}_{31} & a^{12}_{31} & & & & & a^{21}_{41} & a^{22}_{41} & & & a^{31}_{43} & a^{32}_{43} & & & & & & \\
& & & & & & a^{11}_{32} & a^{12}_{32} & & & & & a^{21}_{42} & a^{22}_{42} & & & & & a^{31}_{43} & a^{32}_{43} & & & & \cdot
\CodeAfter
\begin{tikzpicture}[line width=0.3mm, rounded corners=1mm]
\node (1-1r) at (1-1) [draw,minimum width=0.6cm,minimum height=0.45cm] {};
\draw (1-1r.east) -- (1-24-medium.east) ;
\draw (2-1-medium.north) -- (24-1-medium.south) ;
\node (22-7r) at (22-7) [draw,minimum width=0.6cm,minimum height=0.45cm] {};
\draw (22-1-medium.west) -- (22-7r.west) ;
\draw (22-7r.east) -- (22-24-medium.east) ;
\draw (1-7-medium.north) -- (21-7-medium.south) ;
\draw (23-7-medium.north) -- (24-7-medium.south) ;
\node (10-3r) at (10-3) [draw,minimum width=0.6cm,minimum height=0.45cm] {};
\draw (10-2-medium.west) -- (10-3r.west) ;
\draw (10-3r.east) -- (10-6-medium.east) ;
\draw (10-8-medium.west) -- (10-24-medium.east) ;
\draw (2-3-medium.north) -- (9-3-medium.south) ;
\draw (11-3-medium.north) -- (21-3-medium.south) ;
\draw (23-3-medium.north) -- (24-3-medium.south) ;
\node (17-5r) at (17-5) [draw,minimum width=0.6cm,minimum height=0.45cm] {};
\draw (17-2-medium.west) -- (17-2-medium.east) ;
\draw (17-4-medium.west) -- (17-5r.west) ;
\draw (17-5r.east) -- (17-6-medium.east) ;
\draw (17-8-medium.west) -- (17-24-medium.east) ;
\draw (2-5-medium.north) -- (9-5-medium.south) ;
\draw (11-5-medium.north) -- (16-5-medium.south) ;
\draw (18-5-medium.north) -- (21-5-medium.south) ;
\draw (23-5-medium.north) -- (24-5-medium.south) ;
\node (4-2r) at (4-2) [draw,minimum width=0.6cm,minimum height=0.45cm] {};
\draw (4-4-medium.west) -- (4-4-medium.east) ;
\draw (4-6-medium.west) -- (4-6-medium.east) ;
\draw (4-8-medium.west) -- (4-24-medium.east) ;
\draw (2-2-medium.north) -- (3-2-medium.south) ;
\draw (5-2-medium.north) -- (9-2-medium.south) ;
\draw (11-2-medium.north) -- (16-2-medium.south) ;
\draw (18-2-medium.north) -- (21-2-medium.south) ;
\draw (23-2-medium.north) -- (24-2-medium.south) ;
\node (11-4r) at (11-4) [draw,minimum width=0.6cm,minimum height=0.45cm] {};
\draw (11-6-medium.west) -- (11-6-medium.east) ;
\draw (11-8-medium.west) -- (11-24-medium.east) ;
\draw (2-4-medium.north) -- (3-4-medium.south) ;
\draw (5-4-medium.north) -- (9-4-medium.south) ;
\draw (12-4-medium.north) -- (16-4-medium.south) ;
\draw (18-4-medium.north) -- (21-4-medium.south) ;
\draw (23-4-medium.north) -- (24-4-medium.south) ;
\node (13-6r) at (13-6) [draw,minimum width=0.6cm,minimum height=0.45cm] {};
\draw (13-8-medium.west) -- (13-24-medium.east) ;
\draw (2-6-medium.north) -- (3-6-medium.south) ;
\draw (5-6-medium.north) -- (9-6-medium.south) ;
\draw (12-6-medium.north) -- (12-6-medium.south) ;
\draw (14-6-medium.north) -- (16-6-medium.south) ;
\draw (18-6-medium.north) -- (21-6-medium.south) ;
\draw (23-6-medium.north) -- (24-6-medium.south) ;
\node (19-8r) at (19-8) [draw,minimum width=0.6cm,minimum height=0.45cm] {};
\draw (19-6-medium.west) -- (19-6-medium.east) ;
\draw (19-8r.east) -- (19-24-medium.east) ;
\draw (2-8-medium.north) -- (3-8-medium.south) ;
\draw (5-8-medium.north) -- (9-8-medium.south) ;
\draw (12-8-medium.north) -- (16-8-medium.south) ;
\draw (18-8-medium.north) -- (18-8-medium.south) ;
\draw (20-8-medium.north) -- (21-8-medium.south) ;
\draw (23-8-medium.north) -- (24-8-medium.south) ;
\node (20-9r) at (20-9) [draw,minimum width=0.6cm,minimum height=0.45cm] {};
\draw (20-9r.east) -- (20-24-medium.east) ;
\draw (2-9-medium.north) -- (3-9-medium.south) ;
\draw (5-9-medium.north) -- (9-9-medium.south) ;
\draw (12-9-medium.north) -- (12-9-medium.south) ;
\draw (14-9-medium.north) -- (16-9-medium.south) ;
\draw (18-9-medium.north) -- (18-9-medium.south) ;
\draw (21-9-medium.north) -- (21-9-medium.south) ;
\draw (23-9-medium.north) -- (24-9-medium.south) ;
\node (2-11r) at (2-11) [draw,minimum width=0.6cm,minimum height=0.45cm] {};
\draw (2-10-medium.west) -- (2-11r.west) ;
\draw (2-11r.east) -- (2-24-medium.east) ;
\draw (3-11-medium.north) -- (3-11-medium.south) ;
\draw (5-11-medium.north) -- (9-11-medium.south) ;
\draw (12-11-medium.north) -- (12-11-medium.south) ;
\draw (14-11-medium.north) -- (16-11-medium.south) ;
\draw (18-11-medium.north) -- (18-11-medium.south) ;
\draw (21-11-medium.north) -- (21-11-medium.south) ;
\draw (23-11-medium.north) -- (24-11-medium.south) ;
\node (9-13r) at (9-13) [draw,minimum width=0.6cm,minimum height=0.45cm] {};
\draw (9-10-medium.west) -- (9-10-medium.east) ;
\draw (9-12-medium.west) -- (9-13r.west) ;
\draw (9-13r.east) -- (9-24-medium.east) ;
\draw (3-13-medium.north) -- (3-13-medium.south) ;
\draw (5-13-medium.north) -- (8-13-medium.south) ;
\draw (12-13-medium.north) -- (12-13-medium.south) ;
\draw (14-13-medium.north) -- (16-13-medium.south) ;
\draw (18-13-medium.north) -- (18-13-medium.south) ;
\draw (21-13-medium.north) -- (21-13-medium.south) ;
\draw (23-13-medium.north) -- (24-13-medium.south) ;
\node (18-15r) at (18-15) [draw,minimum width=0.6cm,minimum height=0.45cm] {};
\draw (18-10-medium.west) -- (18-10-medium.east) ;
\draw (18-12-medium.west) -- (18-12-medium.east) ;
\draw (18-14-medium.west) -- (18-15r.west) ;
\draw (18-15r.east) -- (18-24-medium.east) ;
\draw (3-15-medium.north) -- (3-15-medium.south) ;
\draw (5-15-medium.north) -- (8-15-medium.south) ;
\draw (12-15-medium.north) -- (12-15-medium.south) ;
\draw (14-15-medium.north) -- (16-15-medium.south) ;
\draw (21-15-medium.north) -- (21-15-medium.south) ;
\draw (23-15-medium.north) -- (24-15-medium.south) ;
\node (21-17r) at (21-17) [draw,minimum width=0.6cm,minimum height=0.45cm] {};
\draw (21-10-medium.west) -- (21-10-medium.east) ;
\draw (21-12-medium.west) -- (21-12-medium.east) ;
\draw (21-14-medium.west) -- (21-14-medium.east) ;
\draw (21-16-medium.west) -- (21-17r.west) ;
\draw (21-17r.east) -- (21-24-medium.east) ;
\draw (3-17-medium.north) -- (3-17-medium.south) ;
\draw (5-17-medium.north) -- (8-17-medium.south) ;
\draw (12-17-medium.north) -- (12-17-medium.south) ;
\draw (14-17-medium.north) -- (16-17-medium.south) ;
\draw (23-17-medium.north) -- (24-17-medium.south) ;
\node (3-19r) at (3-19) [draw,minimum width=0.6cm,minimum height=0.45cm] {};
\draw (3-10-medium.west) -- (3-10-medium.east) ;
\draw (3-12-medium.west) -- (3-12-medium.east) ;
\draw (3-14-medium.west) -- (3-14-medium.east) ;
\draw (3-16-medium.west) -- (3-16-medium.east) ;
\draw (3-18-medium.west) -- (3-19r.west) ;
\draw (3-19r.east) -- (3-24-medium.east) ;
\draw (5-19-medium.north) -- (8-19-medium.south) ;
\draw (12-19-medium.north) -- (12-19-medium.south) ;
\draw (14-19-medium.north) -- (16-19-medium.south) ;
\draw (23-19-medium.north) -- (24-19-medium.south) ;
\node (12-21r) at (12-21) [draw,minimum width=0.6cm,minimum height=0.45cm] {};
\draw (12-10-medium.west) -- (12-10-medium.east) ;
\draw (12-12-medium.west) -- (12-12-medium.east) ;
\draw (12-14-medium.west) -- (12-14-medium.east) ;
\draw (12-16-medium.west) -- (12-16-medium.east) ;
\draw (12-18-medium.west) -- (12-18-medium.east) ;
\draw (12-20-medium.west) -- (12-21r.west) ;
\draw (12-21r.east) -- (12-24-medium.east) ;
\draw (5-21-medium.north) -- (8-21-medium.south) ;
\draw (14-21-medium.north) -- (16-21-medium.south) ;
\draw (23-21-medium.north) -- (24-21-medium.south) ;
\node (14-23r) at (14-23) [draw,minimum width=0.6cm,minimum height=0.45cm] {};
\draw (14-10-medium.west) -- (14-10-medium.east) ;
\draw (14-12-medium.west) -- (14-12-medium.east) ;
\draw (14-14-medium.west) -- (14-14-medium.east) ;
\draw (14-16-medium.west) -- (14-16-medium.east) ;
\draw (14-18-medium.west) -- (14-18-medium.east) ;
\draw (14-20-medium.west) -- (14-20-medium.east) ;
\draw (14-22-medium.west) -- (14-23r.west) ;
\draw (14-23r.east) -- (14-24-medium.east) ;
\draw (5-23-medium.north) -- (8-23-medium.south) ;
\draw (15-23-medium.north) -- (16-23-medium.south) ;
\draw (23-23-medium.north) -- (24-23-medium.south) ;
\node (7-10r) at (7-10) [draw,minimum width=0.6cm,minimum height=0.45cm] {};
\draw (7-12-medium.west) -- (7-12-medium.east) ;
\draw (7-14-medium.west) -- (7-14-medium.east) ;
\draw (7-16-medium.west) -- (7-16-medium.east) ;
\draw (7-18-medium.west) -- (7-18-medium.east) ;
\draw (7-20-medium.west) -- (7-20-medium.east) ;
\draw (7-22-medium.west) -- (7-22-medium.east) ;
\draw (7-24-medium.west) -- (7-24-medium.east) ;
\draw (5-10-medium.north) -- (6-10-medium.south) ;
\draw (8-10-medium.north) -- (8-10-medium.south) ;
\draw (15-10-medium.north) -- (16-10-medium.south) ;
\draw (23-10-medium.north) -- (24-10-medium.south) ;
\node (16-12r) at (16-12) [draw,minimum width=0.6cm,minimum height=0.45cm] {};
\draw (16-14-medium.west) -- (16-14-medium.east) ;
\draw (16-16-medium.west) -- (16-16-medium.east) ;
\draw (16-18-medium.west) -- (16-18-medium.east) ;
\draw (16-20-medium.west) -- (16-20-medium.east) ;
\draw (16-22-medium.west) -- (16-22-medium.east) ;
\draw (16-24-medium.west) -- (16-24-medium.east) ;
\draw (5-12-medium.north) -- (6-12-medium.south) ;
\draw (8-12-medium.north) -- (8-12-medium.south) ;
\draw (15-12-medium.north) -- (15-12-medium.south) ;
\draw (23-12-medium.north) -- (24-12-medium.south) ;
\node (23-14r) at (23-14) [draw,minimum width=0.6cm,minimum height=0.45cm] {};
\draw (23-16-medium.west) -- (23-16-medium.east) ;
\draw (23-18-medium.west) -- (23-18-medium.east) ;
\draw (23-20-medium.west) -- (23-20-medium.east) ;
\draw (23-22-medium.west) -- (23-22-medium.east) ;
\draw (23-24-medium.west) -- (23-24-medium.east) ;
\draw (5-14-medium.north) -- (6-14-medium.south) ;
\draw (8-14-medium.north) -- (8-14-medium.south) ;
\draw (15-14-medium.north) -- (15-14-medium.south) ;
\draw (24-14-medium.north) -- (24-14-medium.south) ;
\node (5-16r) at (5-16) [draw,minimum width=0.6cm,minimum height=0.45cm] {};
\draw (5-18-medium.west) -- (5-18-medium.east) ;
\draw (5-20-medium.west) -- (5-20-medium.east) ;
\draw (5-22-medium.west) -- (5-22-medium.east) ;
\draw (5-24-medium.west) -- (5-24-medium.east) ;
\draw (6-16-medium.north) -- (6-16-medium.south) ;
\draw (8-16-medium.north) -- (8-16-medium.south) ;
\draw (15-16-medium.north) -- (15-16-medium.south) ;
\draw (24-16-medium.north) -- (24-16-medium.south) ;
\node (15-18r) at (15-18) [draw,minimum width=0.6cm,minimum height=0.45cm] {};
\draw (15-20-medium.west) -- (15-20-medium.east) ;
\draw (15-22-medium.west) -- (15-22-medium.east) ;
\draw (15-24-medium.west) -- (15-24-medium.east) ;
\draw (6-18-medium.north) -- (6-18-medium.south) ;
\draw (8-18-medium.north) -- (8-18-medium.south) ;
\draw (24-18-medium.north) -- (24-18-medium.south) ;
\node (24-20r) at (24-20) [draw,minimum width=0.6cm,minimum height=0.45cm] {};
\draw (24-22-medium.west) -- (24-22-medium.east) ;
\draw (24-24-medium.west) -- (24-24-medium.east) ;
\draw (6-20-medium.north) -- (6-20-medium.south) ;
\draw (8-20-medium.north) -- (8-20-medium.south) ;
\node (6-22r) at (6-22) [draw,minimum width=0.6cm,minimum height=0.45cm] {};
\draw (6-24-medium.west) -- (6-24-medium.east) ;
\draw (8-22-medium.north) -- (8-22-medium.south) ;
\node (8-24r) at (8-24) [draw,minimum width=0.6cm,minimum height=0.45cm] {};
\end{tikzpicture}
\end{pNiceMatrix}
$$
\endgroup

The unique row monomial is
$$\left(a^{1,1}_{2,3}\right)^2\, a^{1,1}_{3,4}\, a^{1,1}_{4,1}\, a^{1,2}_{3,4}\, a^{1,2}_{4,1}\, \left(a^{1,2}_{1,2}\right)^2\, a^{2,1}_{4,3}\, a^{2,1}_{1,4}\, a^{2,1}_{2,1}\, a^{2,1}_{3,2}\, a^{3,1}_{4,2}\, a^{3,1}_{1,3}\, a^{3,1}_{2,4}\, a^{3,1}_{3,1}\, a^{2,2}_{2,3}\, a^{2,2}_{3,4}\, a^{2,2}_{4,1}\, a^{2,2}_{1,2}\, a^{3,2}_{3,2}\, a^{3,2}_{4,3}\, a^{3,2}_{1,4}\, a^{3,2}_{2,1}.$$

First we cross out a maximal uncrossed $1$-block of size 2 which contains orbits of rows $(1,2,3)$ and $(1,3,4)$ using $S=\{1,2\}$. Then using $S=\{1\}$ a maximal uncrossed $2$-block of size 1 which contains only the orbit of $(1,2,4)$ -- the orbit of $(1,3,4)$ is already crossed.

Then a $3$-block with $(1,3,2)$ using $S=\{1\}$, a $2$-block with $(1,4,2)$ using $S=\{2\}$ and a $3$-block with $(1,4,3)$ using $S=\{2\}$.
\end{example}

\section{Generic subrank of higher-order tensors}
\label{higher-order}

In this section we will generalise the results of the previous section to higher-order tensors. Below we will state all results in these settings but omit parts of the proofs which are straightforward generalisations of the previous results and of the work in \cite{10.4230/LIPIcs.CCC.2022.9}.

First, note that for any $n_1,\ldots, n_k$ there is a non-empty Zariski-open subset $U \subseteq K^{n_1,\ldots, n_k}$ and integer $r$ such that for all $T \in U$ we have $Q(T)=r$.


We define $$X_r^{(n_1,\ldots,n_k)} := \{T\in K^{n_1, \ldots, n_k} \: | \: T_{i_1,\dots,i_k}=0 \text{ for } (i_1,\ldots,i_k) \in [r]^k \setminus \{ (i,\ldots,i) \:|\: i \in [r] \}, T_{i,\ldots,i} \neq 0 \text{ for } i \in [r]\}.$$  
Let $$\Prr^{(k)} := \{(j_1, \ldots, j_k) \in [r]^k \: | \:\: \text{any subset of } (j_1, \ldots, j_k) \text{ of order } k-1 \text{ has at least 2 disjoint elements}\}$$
and
$$W_r^{(k)} = \{T \in K^{r,\ldots, r} | T_{j_1,\hdots,j_k} = 0 \text{ for } (j_1,\hdots,j_k) \in \Prr^{(k)}\}.$$
Note that these definitions generalise the case $k=3$. Indeed, it follows from the proof of \cite[Theorem 3.9]{10.4230/LIPIcs.CCC.2022.9} that tensors in $W_r^{(k)}$ are sums of the flattenings of tensors of $X_r^{(n_1,\ldots,n_k)}$ with respect to one of the coordinates. 
Now we can state the $k$-tensor version of \autoref{thmgen}.
\begin{theorem}
\label{dmz-gen}
    Let $r \leq n_1,\hdots,n_k$ such that $n_i - r \leq r^2$ for $i = 1,\hdots,k$. Let $\mathcal{X}_i \subseteq \bigotimes_{j=1}^{k-1} K^r$ be a generic linear subspace of dimension $n_i-r$ for $i=1,\hdots,k$. Suppose $$\mathcal{X}_1[1] + \hdots + \mathcal{X}_k[k] + W_r^{(k)} = K^{r,\ldots, r}.$$
    Then $Q(n_1,\hdots,n_k) \geq r$. If characteristic of $K$ is 0, then the converse holds.
\end{theorem}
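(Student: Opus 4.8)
The plan is to follow the proof of \autoref{thmgen} (that is, \cite[Theorem 3.9]{10.4230/LIPIcs.CCC.2022.9}) essentially verbatim, replacing the order-$3$ combinatorial objects $\Prr$ and $W_r$ by their order-$k$ analogues $\Prr^{(k)}$ and $W_r^{(k)}$. First I would turn the statement about generic subrank into one about a single group orbit. Let $G=\mathrm{GL}_{n_1}\times\cdots\times\mathrm{GL}_{n_k}$ act on $K^{n_1,\ldots,n_k}$ factor-wise; then $\mathcal{C}_r^{(n_1,\ldots,n_k)}$ is $G$-stable and $Q(n_1,\ldots,n_k)\geq r$ is equivalent to $\mathcal{C}_r^{(n_1,\ldots,n_k)}$ being Zariski dense. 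The key point is that $\mathcal{C}_r^{(n_1,\ldots,n_k)}=G\cdot X_r^{(n_1,\ldots,n_k)}$ exactly: on one hand, projecting $x_0\in X_r^{(n_1,\ldots,n_k)}$ onto the first $r$ coordinates in every factor and rescaling by its nonzero diagonal yields $I_r$, so $X_r^{(n_1,\ldots,n_k)}\subseteq\mathcal{C}_r^{(n_1,\ldots,n_k)}$; on the other hand, if $I_r\leq T$ via maps $A_i\colon K^{n_i}\to K^r$, then each $A_i$ must be surjective (otherwise the $i$-th flattening of $(A_1\otimes\cdots\otimes A_k)T$ would have rank $<r$, whereas that of $I_r$ is $r$), so one may pick $g_i\in\mathrm{GL}_{n_i}$ with $A_ig_i^{-1}$ the coordinate projection and conclude $g\cdot T\in X_r^{(n_1,\ldots,n_k)}$. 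Hence it suffices to decide when the multiplication map $\mu\colon G\times X_r^{(n_1,\ldots,n_k)}\to K^{n_1,\ldots,n_k}$ is dominant.

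Next I would study dominance of $\mu$ through its differential at a generic point $(\mathrm{id},x_0)$, whose image is $\mathfrak{g}\cdot x_0+T_{x_0}X_r^{(n_1,\ldots,n_k)}$ with $\mathfrak{g}=\bigoplus_i\mathfrak{gl}_{n_i}$. For the forward implication it is enough to exhibit one point at which this differential is surjective, since a morphism with surjective differential at a point is dominant over any infinite field; this is what makes the forward direction characteristic-free. For the converse one runs the equivalences backwards, and the only step that fails in positive characteristic is ``dominant $\Rightarrow$ surjective differential at a generic point'', which is generic smoothness and needs $\mathrm{char}\,K=0$. Since $X_r^{(n_1,\ldots,n_k)}$ is cut out by the vanishing of the off-diagonal corner entries, $T_{x_0}X_r^{(n_1,\ldots,n_k)}$ is the space of tensors supported off $[r]^k\setminus\{(i,\ldots,i)\}$, so surjectivity of $d\mu$ amounts to the projection of $\mathfrak{g}\cdot x_0$ onto the off-diagonal corner being onto.

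It remains to match this projection with the spanning condition, which is the genuinely new (though routine) combinatorial part. Writing $x_0$ with nonzero diagonal $d_c=(x_0)_{c,\ldots,c}$ and generic entries in the slices that involve an index $>r$, I would split the action of $\xi=(\xi_1,\ldots,\xi_k)\in\mathfrak{g}$ on the corner into two parts. The entries of $\xi_i$ with both indices in $[r]$ only reach, through the diagonal of $x_0$, the corner positions in which $k-1$ of the indices coincide, i.e. exactly the complement of $\Prr^{(k)}$; together with the diagonal directions coming from $T_{x_0}X_r^{(n_1,\ldots,n_k)}$ these account precisely for $W_r^{(k)}$. The entries of $\xi_i$ whose second index exceeds $r$ instead bring in the $n_i-r$ generic higher slices of $x_0$, and for each fixed value of the $i$-th index they sweep out one and the same generic subspace of $\bigotimes_{j\neq i}K^r$ of dimension $n_i-r$, namely $\mathcal{X}_i$, so their total contribution is exactly $\mathcal{X}_i[i]$. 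Reassembling, $d\mu$ is surjective if and only if $\mathcal{X}_1[1]+\cdots+\mathcal{X}_k[k]+W_r^{(k)}=K^{r,\ldots,r}$, with $\Prr^{(k)}$ now playing the role that ``all indices distinct'' played for $k=3$; here the hypothesis $n_i-r\leq r^2\leq r^{k-1}$ guarantees that such generic subspaces $\mathcal{X}_i$ exist.

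The main obstacle is precisely this last identification for general $k$: checking that the ``corner'' part of $\mathfrak{g}\cdot x_0$ lands in and spans $W_r^{(k)}$ while the ``higher-slice'' part produces independent generic copies of the $\mathcal{X}_i$, with the two contributions overlapping only in the way already recorded by $W_r^{(k)}$. This is where the definition of $\Prr^{(k)}$ through ``no value occurring at least $k-1$ times'' must be used, and where genericity of the entries of $x_0$ is invoked to ensure the $n_i-r$ slices behave like a generic $\mathcal{X}_i$. Apart from this bookkeeping the argument is the verbatim order-$k$ analogue of \cite[Theorem 3.9]{10.4230/LIPIcs.CCC.2022.9}.
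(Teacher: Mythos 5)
Your proposal is correct and takes essentially the same route as the paper, which states \autoref{dmz-gen} without proof precisely because it regards it as the straightforward order-$k$ generalisation of \cite[Theorem 3.9]{10.4230/LIPIcs.CCC.2022.9}: your orbit description $\mathcal{C}_r^{(n_1,\ldots,n_k)}=G\cdot X_r^{(n_1,\ldots,n_k)}$, the computation identifying the image of the differential with $\mathcal{X}_1[1]+\cdots+\mathcal{X}_k[k]+W_r^{(k)}+Y_r^{(n_1,\ldots,n_k)}$, and generic smoothness for the characteristic-$0$ converse are exactly the ingredients the authors rely on (and reuse explicitly later in the proof of \autoref{dimension}). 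In particular, your matching of the complement of $\Prr^{(k)}$ with the corner positions reachable by $\mathfrak{g}\cdot x_0$ through the diagonal is the same observation the paper records when it notes that $W_r^{(k)}$ consists of sums of flattenings of tensors in $X_r^{(n_1,\ldots,n_k)}$.
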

As before, we can translate the condition of the theorem above to the following matrix being full rank:
\begin{lemma}
\label{transfergen}
    With the assumptions of \nameref{dmz-gen} above we have $$\mathcal{X}_1[1] + \hdots + \mathcal{X}_k[k] + W_r^{(k)} = K^{r,\ldots,r},$$ if and only if the matrix $M \in K^{r(n_1+\hdots+n_k - kr) \times (r^k-kr^2+(k-1)r)}$ defined by $$M^{t,m,s}_{j_1,\hdots,j_k} = \begin{cases}
        a^{1,s}_{j_2,\hdots,j_k},& t=1, m=j_1; \\
        a^{2,s}_{j_1,j_3,\hdots,j_k},& t=2, m=j_2; \\
        \dots \\
        a^{k,s}_{j_1,\hdots,j_{k-1}},& t=k, m=j_k; \\
        0 & \text{otherwise}.
    \end{cases}$$
    for generic $a^{t,s}_{\bullet,\hdots,\bullet} \in K$ has rank $r^k-kr^2+(k-1)r$. Here the columns of $M$ are labelled by triples $(t,m,s)$ with $t \in [k],  m \in [r], s \in [n_t-r]$, and the rows by tuples $(j_1,\hdots,j_k) \in \Prr^{(k)}$.
\end{lemma}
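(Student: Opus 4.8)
The plan is to transcribe the proof of \autoref{transfer} into the higher-order setting, with pairs of indices replaced by $(k-1)$-tuples and slices replaced by one-coordinate flattenings. Fix the standard basis $e_1,\dots,e_r$ of $K^r$. Since each $\mathcal{X}_t$ is a generic subspace of the $(k-1)$-fold tensor power of $K^r$ of dimension $n_t-r$, which I identify with the tensor factors other than the $t$-th, I would write it as
$$\mathcal{X}_t=\left\langle \sum a^{t,s}_{j_1,\dots,\widehat{j_t},\dots,j_k}\; e_{j_1}\otimes\cdots\otimes\widehat{e_{j_t}}\otimes\cdots\otimes e_{j_k}\right\rangle_{s\in[n_t-r]},$$
the hat denoting omission of the $t$-th factor, the sum ranging over the remaining $k-1$ indices, with generic $a^{t,s}_{\bullet}\in K$. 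Then $\mathcal{X}_t[t]$ is spanned by the vectors $v_{t,m,s}$ obtained from these generators by inserting $e_m$ into the $t$-th tensor factor, for $m\in[r]$ and $s\in[n_t-r]$. These $r(n_t-r)$ vectors form a basis of $\mathcal{X}_t[t]$, so together the $v_{t,m,s}$ form a basis of the external direct sum $\mathcal{X}_1[1]\oplus\cdots\oplus\mathcal{X}_k[k]$, which I use as the domain of the natural map
$$\pi\colon \mathcal{X}_1[1]\oplus\cdots\oplus\mathcal{X}_k[k]\longrightarrow K^{r,\dots,r}/W_r^{(k)}$$
sending each summand by inclusion followed by the quotient projection, so that $\im\pi = (\mathcal{X}_1[1]+\cdots+\mathcal{X}_k[k]+W_r^{(k)})/W_r^{(k)}$.

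By definition $W_r^{(k)}$ is spanned by those $e_{j_1}\otimes\cdots\otimes e_{j_k}$ with $(j_1,\dots,j_k)\notin \Prr^{(k)}$, so the residues of $e_{j_1}\otimes\cdots\otimes e_{j_k}$ with $(j_1,\dots,j_k)\in\Prr^{(k)}$ form a basis of the quotient. Reading off the coefficient of each such basis vector in $\pi(v_{t,m,s})$, one sees it equals $a^{t,s}_{j_1,\dots,\widehat{j_t},\dots,j_k}$ when $m=j_t$ and $0$ otherwise; that is, $M$ is precisely the matrix of $\pi$ in these two bases. Consequently $\mathcal{X}_1[1]+\cdots+\mathcal{X}_k[k]+W_r^{(k)}=K^{r,\dots,r}$ holds iff $\pi$ is surjective iff $M$ has rank equal to $\dim\big(K^{r,\dots,r}/W_r^{(k)}\big)=|\Prr^{(k)}|$, which (as the rows of $M$ are indexed by $\Prr^{(k)}$) is full row rank.

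To match the stated dimensions I would separately verify $|\Prr^{(k)}|=r^k-kr^2+(k-1)r$. A tuple fails to lie in $\Prr^{(k)}$ exactly when some $k-1$ of its coordinates are all equal; letting $A_i$ be the set of tuples whose coordinates other than the $i$-th are all equal, one has $|A_i|=r^2$, while for $k\geq 3$ any two distinct $A_i$ overlap only in the $r$ constant tuples, so every intersection of two or more of the $A_i$ has size $r$. Inclusion–exclusion together with $\sum_{j=0}^{k}(-1)^j\binom{k}{j}=0$ then gives
$$\Big|\bigcup_{i=1}^{k} A_i\Big| = kr^2 + r\sum_{j=2}^{k}(-1)^{j-1}\binom{k}{j} = kr^2-(k-1)r,$$
whence $|\Prr^{(k)}|=r^k-kr^2+(k-1)r$, which is the number of rows of $M$ as required.

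The argument is essentially routine given the $k=3$ case, so I do not expect a serious obstacle. The only genuinely new ingredient is the cardinality computation for $|\Prr^{(k)}|$ above; the remaining work is careful multi-index bookkeeping to confirm that inserting $e_m$ in the $t$-th slot and projecting modulo $W_r^{(k)}$ reproduces the defining formula for $M^{t,m,s}_{j_1,\dots,j_k}$, and that, exactly as in the $k=3$ case, using the external direct sum as the domain lets surjectivity of $\pi$ translate directly into full row rank of $M$.
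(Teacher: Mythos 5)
Your proof is correct and is essentially the paper's own argument: the paper gives no separate proof of this lemma, explicitly omitting it as a straightforward generalisation of \autoref{transfer}, and your transcription (generic generators of $\mathcal{X}_t$, the basis $v_{t,m,s}$ of the external direct sum, the quotient basis indexed by $\Prr^{(k)}$, and the identification of $M$ as the matrix of the projection $\pi$ so that surjectivity equals full row rank) is exactly that generalisation. Your inclusion--exclusion computation of $|\Prr^{(k)}| = r^k - kr^2 + (k-1)r$ is a correct verification of the dimension count that the paper states but leaves implicit.
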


We claim that our method to prove \autoref{supertheorem} can be used to show the matrix is of full rank. To make this precise, we define the analogues of blocks and orbits in the generalised context.
\begin{definition}
  There is a natural $\mathbb{Z}/r\mathbb{Z}$-action on the set $\Prr^{(k)}$, which we identify with rows of $M$, given by $a \cdot (j_1,\hdots,j_k) = (j_1+a,\hdots,j_k+a) \mod r.$ For $t = 1,\hdots,k$ we say that a set $B$ of $b$ orbits under this action is an \highlight{t-block of size b} if for any two orbits $O_1, O_2 \in B$ and some (or, equivalently, any) $p_1 \in O_1$ there exists $p_2 \in O_2$ such that $p_1$ and $p_2$ differ only in the $t$-th place.
\end{definition}

The proof of the following lemma is completely analogous to \autoref{intersection}.

\begin{lemma}
\label{intgen}
    Let $k\geq 3$. If $t_1\neq t_2$, then a $t_1$-block and a $t_2$-block (with rows in the set $\Prr^{(k)}$) share at most one orbit.
\end{lemma}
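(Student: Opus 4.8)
The plan is to adapt the proof of \autoref{intersection} almost verbatim, the only genuine new ingredient being the choice of a normalizing coordinate now that $k$ may exceed $3$. First I would use the hypothesis $k\geq 3$ to fix some index $t_0 \in [k]\setminus\{t_1,t_2\}$; such an index exists precisely because there are at least three coordinates while only two are excluded. With this $t_0$ in hand, I select from every orbit the unique representative whose $t_0$-th entry equals $1$. This canonical representative is well-defined because the $\mathbb{Z}/r\mathbb{Z}$-action shifts all coordinates simultaneously, so there is exactly one shift sending the $t_0$-th entry to $1$.

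The key step is to upgrade the pairwise block condition into a uniform statement about these canonical representatives. Suppose the $t_1$-block and the $t_2$-block share an orbit $\mathcal{O}$, with canonical representative $p$. For any other orbit $O'$ in the $t_1$-block, the defining property of a $t_1$-block (applied with the chosen representative $p\in\mathcal{O}$) yields some $p'\in O'$ differing from $p$ only in the $t_1$-th coordinate. Since $t_0\neq t_1$, the point $p'$ still has $t_0$-th entry equal to $1$, so $p'$ is the canonical representative of $O'$; hence every canonical representative in the $t_1$-block agrees with $p$ outside the $t_1$-th coordinate. The same argument shows every canonical representative in the $t_2$-block agrees with $p$ outside the $t_2$-th coordinate.

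Finally I would combine these two constraints. If an orbit $O'$ distinct from $\mathcal{O}$ lay in both blocks, its canonical representative $q$ would agree with $p$ in all coordinates except possibly the $t_1$-th, and also in all coordinates except possibly the $t_2$-th. Since $t_1\neq t_2$, every index $i$ satisfies $i\neq t_1$ or $i\neq t_2$, forcing $q_i=p_i$ for all $i$, i.e.\ $q=p$ and $O'=\mathcal{O}$, a contradiction. Thus the two blocks share at most the single orbit $\mathcal{O}$.

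The only place that could cause trouble is the existence of the normalizing coordinate: in the case $k=3$ of \autoref{intersection} the index $t_0$ was forced and unique, whereas here one must observe that $k\geq 3$ still guarantees at least one admissible choice, after which the rest of the argument is identical. I expect no further obstacle, since the normalization makes the ``differ only in one place'' relations compatible across the whole block and the final coordinate-by-coordinate comparison is purely combinatorial.
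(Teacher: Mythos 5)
Your proof is correct and takes essentially the same approach as the paper: the paper dispatches this lemma as ``completely analogous to \autoref{intersection}'', and your argument is precisely that adaptation---canonical representatives with $1$ in a fixed coordinate $t_0\notin\{t_1,t_2\}$, block representatives differing from $p$ only in the $t_1$-th (resp.\ $t_2$-th) place, and the coordinate-wise comparison forcing a shared second orbit to coincide with $\mathcal{O}$. Your observation that the only new point for $k\geq 3$ is the existence (rather than uniqueness) of the normalizing index $t_0$ is exactly what the paper's ``analogous'' is implicitly relying on.
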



We can cross out maximal uncrossed blocks in a way analogous to \autoref{block}.

\begin{theorem}
\label{bigblock}
    Let $t\in\{1,2,\ldots,k\}$. Consider a maximal uncrossed $t$-block of size $b$ and let $B$ be the set of rows in its orbits. Let $S\subseteq [n_t-r]$ of cardinality $b$ and let $C=\{(t,m,s)\mid m\in[r], s\in S\}$ be a set of uncrossed columns. Then it is possible to cross out precisely all rows in $B$ and all columns in $C$.
\end{theorem}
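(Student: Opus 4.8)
The plan is to transcribe the proof of \autoref{block} essentially verbatim, with the inner index tuple $(j_2,\ldots,j_k)$ (in the case $t=1$, and analogously for other $t$) playing the role that the pair $(j,k)$ played in the third-order argument. As the paper permits omitting routine generalisations, I would isolate the two structural facts that drove the $k=3$ proof and check that they survive for the $\mathbb{Z}/r\mathbb{Z}$-action on $\Prr^{(k)}$. The first is that all occurrences of a fixed variable $a^{t,s}_{\bullet}$ share the same value $m$ of the $t$-th coordinate and, among the $t$-columns, occupy only columns of the form $(t,m,\bullet)$; this is immediate from the entry formula for $M$ in \autoref{transfergen}. The second is that each orbit of a $t$-block contains exactly one representative with a prescribed value of the $t$-th coordinate; this holds because the diagonal shift $a\cdot(j_1,\ldots,j_k)=(j_1+a,\ldots,j_k+a)$ acts bijectively on any single coordinate modulo $r$.

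First I would establish the higher-order analogue of \autoref{all-in-block}: if a variable $\Tilde a$ occurs both in a row of $B$ and in a column of $C$, then every occurrence of $\Tilde a$ lies in a row of $B$ and a column of $C$. Taking $t=1$ without loss of generality, the variable $a^{1,s}_{j_2,\ldots,j_k}$ occupies exactly the entries $M^{1,m,s}_{m,j_2,\ldots,j_k}$, so it sits in the rows $(m,j_2,\ldots,j_k)$ and the columns $(1,m,s)$ as $m$ ranges over the values for which $(m,j_2,\ldots,j_k)\in\Prr^{(k)}$. If $C$ contains one such column then $s\in S$, whence $C$ contains $(1,m',s)$ for every $m'$ and thus all column-occurrences of $\Tilde a$; and if one such row lies in $B$, then by maximality of the uncrossed $1$-block $B$ every uncrossed orbit obtained by changing the first coordinate already belongs to $B$, so all uncrossed row-occurrences lie in $B$. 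Only the two facts above are used, so the argument is unchanged for any $k$.

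With this in hand, the crossing-out procedure is the same greedy one: repeatedly pick a variable lying in an uncrossed row of $B$ and an uncrossed column of $C$ and cross out all its occurrences, which by the analogue above never leaves the prescribed rows and columns. It then remains to show the procedure never gets stuck, i.e. that whenever an uncrossed column $(1,m,s)\in C$ remains, it still contains an uncrossed entry. Here I would run the counting of \autoref{block}: the nonzero entries of $(1,m,s)$ in rows of $B$ are the $a^{1,s}_{j_2,\ldots,j_k}$ sitting in rows $(m,j_2,\ldots,j_k)\in B$, and since each of the $b$ orbits of the block contains exactly one tuple with first coordinate $m$, there are exactly $b$ of them. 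If all were crossed, each would have been crossed by a chosen variable of its row; because only columns of $C$ are ever crossed, and a row with first index $m$ meets the $t=1$ columns only in $(1,m,\bullet)$, that variable lies in a column $(1,m,s')$ with $s'\in S$. As distinct chosen variables occupy distinct columns, this forces all $b$ columns $(1,m,s')$, $s'\in S$, to be crossed; but $(1,m,s)$ is one of them, contradicting that it is uncrossed.

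Since the whole argument is a transcription of the third-order case, the only genuine obstacle is bookkeeping: one must verify that the two structural facts quoted in the first paragraph hold \emph{exactly}, with the longer inner tuple and the larger index set $\Prr^{(k)}$, and that the crossed/uncrossed status of whole orbits (used implicitly in applying maximality of $B$) is preserved because we always cross out all $r$ columns $(t,m,\bullet)$ with a given $(t,s)$ at once, hence entire orbits at a time. No idea beyond \autoref{block} is needed.
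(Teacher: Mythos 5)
Your proposal is correct and takes exactly the approach the paper intends: the paper gives no separate proof of this theorem, stating only that blocks can be crossed out ``in a way analogous to \autoref{block}'', and your argument is precisely that transcription, including the higher-order analogue of \autoref{all-in-block} and the counting argument showing the greedy procedure never gets stuck. The two structural facts you isolate (row/column occurrences of a variable tied to the $t$-th coordinate, and each orbit meeting each value of the $t$-th coordinate exactly once) are indeed the only properties the $k=3$ proof uses, and both persist for the $\mathbb{Z}/r\mathbb{Z}$-action on $\Prr^{(k)}$.
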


We are ready to state the main theorem of this section, generalising \autoref{supertheorem}.

\begin{theorem}
\label{supermegatheorem}
    Let $k\geq 3$. For any $n_1,\ldots, n_k,r\in \mathbb{N}$ such that $$r \leq n_1,\ldots,n_k,\left\lfloor (n_1+\hdots+n_k-(k-1))^{\frac{1}{k-1}} \right\rfloor$$ the matrix $M$ defined as in \autoref{transfergen} has generic rank $r^k - kr^2 + (k-1)r$ over any infinite field.
\end{theorem}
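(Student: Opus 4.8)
The plan is to mirror the proof of \autoref{supertheorem} almost verbatim, replacing the three-fold constructions with the $k$-fold analogues that have already been set up (\autoref{intgen}, \autoref{bigblock}). First I would record that the hypothesis $r \leq \left\lfloor (n_1+\cdots+n_k-(k-1))^{\frac{1}{k-1}} \right\rfloor$ is precisely the statement that $M$ has at least as many columns as rows. Indeed, the columns labelled $(t,m,s)$ number $r\sum_{t}(n_t-r)=r(n_1+\cdots+n_k-kr)$, while the rows indexed by $\Prr^{(k)}$ number $r^k-kr^2+(k-1)r$; dividing the inequality (columns) $\geq$ (rows) by $r$ rearranges to $n_1+\cdots+n_k-(k-1)\geq r^{k-1}$, which is equivalent to the hypothesis. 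This is the only place where the precise value of the bound is used.

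The strategy is again to exhibit a unique row monomial in the sense of \autoref{unique}, so that \autoref{unique-enough} produces a nonvanishing maximal minor of the required size. I would run the same greedy procedure: while uncrossed orbits remain, pick an uncrossed orbit $\mathcal{O}$, and for each $t\in[k]$ let $B_t$ be the unique maximal uncrossed $t$-block containing $\mathcal{O}$. Writing $\mathcal{S}_t$ for the number of indices $s$ such that all columns $(t,m,s)$, $m\in[r]$, are still uncrossed, it suffices to show that some $\mathcal{S}_t$ is at least the orbit-count $|B_t|_o$, since then \autoref{bigblock} lets us cross out $B_t$ using a set $S$ of $|B_t|_o$ surviving indices, making genuine progress.

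The heart of the argument, and the step I would write out most carefully, is the counting that forces such a $t$ to exist. Suppose toward a contradiction that $|B_t|_o \geq \mathcal{S}_t+1$ for every $t$. By \autoref{intgen} any two of the blocks $B_1,\ldots,B_k$ share at most one orbit; since each contains $\mathcal{O}$, they pairwise share exactly $\{\mathcal{O}\}$, so the sets $B_t\setminus\{\mathcal{O}\}$ are pairwise disjoint and disjoint from $\{\mathcal{O}\}$. Hence
$$|B_1\cup\cdots\cup B_k|_o = 1 + \sum_{t=1}^k\bigl(|B_t|_o-1\bigr) = \sum_{t=1}^k|B_t|_o-(k-1) \geq \sum_{t=1}^k\mathcal{S}_t+1.$$
As each orbit consists of $r$ rows, the uncrossed rows number at least $r\,|B_1\cup\cdots\cup B_k|_o$ while the uncrossed columns number exactly $r\sum_t\mathcal{S}_t$, giving strictly more uncrossed rows than columns. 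But $M$ begins with columns $\geq$ rows and every crossing step removes equally many of each, so the uncrossed rows never exceed the uncrossed columns, a contradiction. Thus at each stage some $B_t$ can be crossed out, and iterating exhausts all orbits, yielding the desired $r^k-kr^2+(k-1)r$-sized minor carrying a monomial with coefficient $\pm1$; \autoref{unique-enough} then gives full row rank.

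The only genuinely new point compared with $k=3$ is that the inclusion--exclusion constant "$-2$" becomes "$-(k-1)$"; everything else transfers without change. I therefore expect the write-up to be short, with the main obstacle being purely the bookkeeping of this counting step: one must check that the pairwise disjointness of the $B_t\setminus\{\mathcal{O}\}$ supplied by \autoref{intgen} really collapses to the clean sum $\sum_t|B_t|_o-(k-1)$ for arbitrary $k$, rather than leaving a more complicated alternating inclusion--exclusion tail.
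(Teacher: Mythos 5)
Your proposal is correct and follows essentially the same route as the paper's proof: the same greedy crossing-out of maximal uncrossed blocks via \autoref{bigblock}, the same use of \autoref{intgen} to get the key counting inequality $|B_1\cup\cdots\cup B_k|_o=\sum_t|B_t|_o-(k-1)\geq \sum_t\mathcal{S}_t+1$, and the same row-versus-column comparison to reach a contradiction. The paper's write-up is just a terse "analogous to \autoref{supertheorem}" plus that displayed inequality, so your version simply spells out the details (including the check that the hypothesis on $r$ is equivalent to columns $\geq$ rows) that the paper leaves implicit.
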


\begin{proof}
    As in the proof of \autoref{supertheorem}
    we repeatedly cross out maximal uncrossed blocks from the matrix to produce a monomial with coefficient $\pm 1$ in some maximal minor. Since we can cross out blocks using \autoref{bigblock}, we only have to check that there is always a maximal uncrossed block with a suitable set of columns. With notation from the proof of \autoref{supertheorem} it follows from \autoref{intgen} that $$|B_1\cup \hdots \cup B_k|_{o} = |B_1|_{o} + \hdots + |B_k|_{o} - (k-1) \geq \mathcal{S}_1+\dots+\mathcal{S}_k+1.$$ The rest of the proof is analogous to \autoref{supertheorem}.
\end{proof}

\HighOrderMainCor

\begin{proof}
    We apply \autoref{supermegatheorem}. The proof is analogous to \autoref{maincor}.
\end{proof}

\section{Dimension of spaces of tensors of subrank at least $r$}

Recall that in order to bound the generic subrank of tensors in $K^{n_1,\ldots,n_k}$ from above one bounds the dimension of $\mathcal{C}_r^{(n_1,\ldots,n_k)} = \{T \in K^{n_1,\ldots,n_k} \: | \: Q(T) \geq r\}$ (see \cite{10.4230/LIPIcs.CCC.2022.9} for more details). As the spaces $\mathcal{C}_r^{(n_1,\ldots,n_k)}$ are interesting on their own, we apply the methods developed in the previous sections to determine dimensions of these spaces precisely. In this section we often use the notation introduced in the previous section.

To begin with, we have in fact proved in \autoref{supermegatheorem} that if $$r = \min\left\{n_1,\ldots,n_k,\left\lfloor (n_1+\hdots+n_k-(k-1))^{\frac{1}{k-1}} \right\rfloor\right\},$$ then $\dim(\mathcal{C}_r^{(n_1,\ldots,n_k)}) = \dim(K^{n_1,\dots,n_k}) = n_1n_2\dots n_k$. We can generalise this as follows.

\DimensionThm

\begin{proof}
    
    Recall that we have a map $$\psi_r^{(n_1,\ldots,n_k)}: \prod_{i=1}^kGL_{n_i}(K) \times X_r^{(n_1,\ldots,n_k)} \to K^{n_1,\ldots,n_k}$$
    $$(A_1,\ldots, A_{n_k}, T) \mapsto (A_1 \otimes \ldots \otimes A_{n_k})T$$ whose image is precisely $\mathcal{C}_{r}^{(n_1,\ldots,n_k)}$. The proof of this fact is analogous to the proof of \cite[Lemma 2.2]{10.4230/LIPIcs.CCC.2022.9}. By the fiber dimension theorem $$\dim(\mathcal{C}_r^{(n_1,\ldots,n_k)}) = \dim(\prod_{i=1}^kGL_{n_i}(K) \times X_r^{(n_1,\ldots,n_k)}) - \dim(\text{general fiber of }\psi_r^{(n_1,\ldots,n_k)}),$$ hence, in order to bound the dimension of $\mathcal{C}_r^{(n_1,\ldots,n_k)}$ from above one has to bound the dimension of a general fiber of $\psi_r^{(n_1,\ldots,n_k)}$ from below. Following the proofs of \cite[Proposition 2.4 and Theorem 1.5]{10.4230/LIPIcs.CCC.2022.9} we get $$\dim (\mathcal{C}_r^{(n_1,\ldots,n_k)}) \leq \prod_{i=1}^k n_i - r\left(r^{k-1} - \sum_{i=1}^k n_i + (k-1)\right)$$
    
    Recall that the dimension of the image of $\psi_r^{(n_1,\ldots,n_k)}$ is equal to the rank of of the differential of $\psi_r^{(n_1,\ldots,n_k)}$ at a generic point. 
    A straightforward generalisation of the proof of \cite[Theorem 3.9]{10.4230/LIPIcs.CCC.2022.9} shows that the image of a differential of $\psi_r^{(n_1,\ldots,n_k)}$ at a generic point is $$\mathcal{X}_1[1] + \hdots + \mathcal{X}_k[k] + W_r^{(k)} + Y_r^{(n_1,\ldots,n_k)} \subseteq K^{n_1,\ldots,n_k}$$
    where $$Y_r^{(n_1,\ldots,n_k)} = \{T \in K^{n_1,\ldots,n_k} \: | \: T_{j_1,\ldots,j_k} = 0 \text{ for } (j_1,\ldots,j_k) \in [r]^k \setminus \{(i,i,\ldots,i) \: | \: i \in [r]\}\}$$ and $\mathcal{X}_i \subseteq \bigotimes_{j=1}^{k-1} K^r$ is a generic linear subspace of dimension $n_i-r$ for $i=1,\hdots,k$. 

    It follows from the definition that $$\dim(\mathcal{X}_1[1] + \hdots + \mathcal{X}_k[k] + W_r^{(k)} + Y_r^{(n_1,\ldots,n_k)}) =  \dim(Y_r^{(n_1,\ldots,n_k)}) + \dim(\mathcal{X}_1[1] + \hdots + \mathcal{X}_k[k] + W_r^{(k)}) - r.$$
    
    It follows from the proof of \autoref{transfergen} that $$\dim(\mathcal{X}_1[1] + \hdots + \mathcal{X}_k[k] + W_r^{(k)}) = r\left(\sum_{i = 1}^{k}n_i - kr + (kr - k + 1)\right)$$
    if and only if the columns of $M$ defined as in \autoref{transfergen} are linearly independent for a generic set of variables. 
    If this is the case then $$\dim(\mathcal{C}_r^{(n_1,\ldots,n_k)}) = \dim(\im \psi_r^{(n_1,\ldots,n_k)}) = \dim(Y_r^{(n_1,\ldots,n_k)}) + \dim(\mathcal{X}_1[1] + \hdots + \mathcal{X}_k[k] + W_r^{(k)}) - r$$ $$ = \left(\prod_{i=1}^k n_i - r^k + r \right) + r\left(\sum_{i = 1}^{k}n_i - kr+ (kr - k + 1)\right) - r = \prod_{i=1}^k n_i - r\left(r^{k-1} - \sum_{i=1}^k n_i + (k-1)\right).$$
    
    Note that the assumptions imply that $r> (n_1+\hdots+n_k-(k-1))^{\frac{1}{k-1}}$, so the matrix $M$ has more rows than columns. We will prove that the columns of $M$ are linearly independent by adding columns to $M$ to make it a square matrix and using \autoref{supermegatheorem} to show it has full rank.\\
    Suppose that we alter our matrix by increasing $n_t$ for some $t$ by 1. Then we added precisely $r$ columns to the matrix $M$ (indexed by $(t,m,n_t+1)$ for all $m\in[r]$). Since both the number of rows and columns in our original matrix $M$ is divisible by $r$ we can apply such an operation finitely many times to make $M$ a square matrix. Then by \autoref{supermegatheorem} it has full row rank and thus full rank. Then since it is a square matrix, all the columns of the new matrix are linearly independent, hence all columns were linearly independent in the original matrix $M$. 
\end{proof}

\section{Further research directions} 

In this section we highlight several possible direction for further research.

\subsection{Generic rank}
\label{genrank}
While the asymptotic growth of a generic rank and its values for tensors of certain shapes (e.g., in $K^{n,n,n}$) are known, exact values are still unknown in general \cite{FRIEDLAND2012478}.
In the spirit of this paper, one can rephrase the question of finding exact values of a generic rank to a question about the generic rank of a matrix by taking the differential of the map $\psi_r:$ 
\begin{align*}
    \psi_r: \left(K^{n_1}\times K^{n_2}\times K^{n_3}\right)^r &\to K^{n_1,n_2,n_3}, \\
    (a^{1,1},a^{2,1},a^{3,1},\; a^{1,2},a^{2,2},a^{3,2},\; \dots, \; a^{1,r},a^{2,r},a^{3,r}) &\mapsto \sum_{i=1}^r \, a^{1,i}\otimes a^{2,i}\otimes a^{3,i}.
\end{align*}
Note that the structure of the matrix of the differential is similar to that of $M$.

It is conjectured (see \cite{Strassen83} and  \cite[Conjecture 5.1]{FRIEDLAND2012478})
that when $3 \leq n_1 \leq n_2 \leq n_3 \leq (n_1 - 1)(n_2 - 1)$, then apart from the case $n_1=3, n_2=n_3=2k+1$, the generic rank is equal to 
$$R(n_1,n_2,n_3) = \left\lceil\frac{n_1n_2n_3}{n_1+n_2+n_3 - 2}\right\rceil,$$
which is the expected value.

One can ask whether a combinatorial method similar to the one developed in this paper, possibly with an adapted notion of a unique monomial, could be used to prove that the matrix has expected rank.

\subsection{Equations of the varieties $\overline{\mathcal{C}_r}$}
Another interesting problem to consider concerns the description of the ideal $I(\mathcal{C}_r)$. Note that it is a difficult problem as, for instance, $\overline{\mathcal{C}_r} \subset K^{r,r,r} = \{T \in K^{r,r,r} \: | R(T) \leq r\}$ which is the $r$-th secant variety of the Segre embedding. Describing the equations of secants of Segre varieties  is a major open problem as these varieties are loci of tensors of border rank at most $r$ (see \cite{Landsberg2006GeneralizationsOS} and \cite[Chapter 7]{JML} for more information). 

We note that in general there are still some similarities with the case of secants of Segre varieties since $\overline{\mathcal{C}_r} \subset K^{n,n,n}$ is invariant under the action of $GL_n \times GL_n \times GL_n$ so we can study $I(\overline{\mathcal{C}_r})$ with the tools from representation theory.

\subsection{(Generic) border subrank}

For a tensor $T \in K^{n_1,\ldots, n_k}$ we define its \textit{border subrank} $\underline{Q}(T)$ as the maximal non-negative integer $s \leq n$ such that $$I_s \in \overline{\prod_{i=1}^k GL_{n_i}(K)(T)}$$ where $I_s := I_1^{\oplus s} = \sum_{j=1}^s \bigotimes_{i=1}^{k}e_j^{(i)}$ and $\{e_{j}^{(i)}\}_{j=1}^{n_i}$ is a basis of the corresponding $K^{n_i}$ for each $1 \leq i \leq k$. Note that this definition is independent of the choice of the bases and we always have $Q(T) \leq \underline{Q}(T)$. Then we can define sets $$\underline{\mathcal{C}}_r^{(n_1,\ldots,n_k)} := \{T \in K^{n_1,\ldots,n_k} \: | \: \underline{Q}(T) \geq r\}.$$ It follows from the definition that $\mathcal{C}_r^{(n_1,\ldots,n_k)} \subset \underline{\mathcal{C}}_r^{(n_1,\ldots,n_k)}$. Moreover, one can prove a similar upper bound as in Theorem \ref{dimension} for the dimensions of the sets $\underline{\mathcal{C}}_r^{(n_1,\ldots,n_k)}$ (see \cite[Theorem 3]{biaggi2024bordersubrankgeneralisedhilbertmumford}) which, in fact, yields the same asymptotic growth for the generic border subrank \cite[Theorem 2]{biaggi2024bordersubrankgeneralisedhilbertmumford}: $$\underline{Q}(n,n,\ldots,n) = \Theta(n^{1/(k-1)}).$$ In \cite[Theorem 4]{biaggi2024bordersubrankgeneralisedhilbertmumford} it is shown that the generic border subrank in $K^{n,n,n}$ is at least $\lfloor \sqrt{4n} \rfloor - 3$, which is bigger then the generic subrank for large enough $n$. An interesting research direction is to find the exact values (or at least give bounds similar to \cite[Theorem 1.2 and Theorem 1.3]{10.4230/LIPIcs.CCC.2022.9}) of the generic border subrank. 

On the other hand, determining dimensions of $\underline{\mathcal{C}}_r^{(n_1,\ldots,n_k)}$ seems to be a challenging problem as the only lower bound we are aware of is \cite[Theorem 1.4]{maxbrdrsubrank}: $$\dim(\underline{\mathcal{C}}_r^{(r,r,r)}) \geq \frac{2r^3 + 3r^2 - 2r - 3}{3}.$$

\bibliographystyle{alpha}
\bibliography{bibl}

\textsc{Paweł Pielasa, Faculty of Mathematics, Informatics and Mechanics, University of Warsaw}\\
\textit{email address:} p.pielasa@student.uw.edu.pl

\textsc{Matouš Šafránek, Faculty of Mathematics and Physics, Charles University, Prague}\\
\textit{email address:} matous.safranek@gmail.com.

\textsc{Anatoli Shatsila, Institute of Mathematics, Jagiellonian University in Krak\'ow, Poland}\\
\textit{email address:} anatoli.shatsila@doctoral.uj.edu.pl.

\end{document}